\theoremstyle{definition}       
\newtheorem{lemma}{Lemma}
\newtheorem{theorem}{Theorem}
\newtheorem{corollary}{Corollary}
\newtheorem*{convex-main}{Theorem \ref{th:theorem_stronglyconvex}}
\newtheorem*{convex-corollary}{Corollary \ref{th:convex-corollary}}
\newtheorem*{perturbation-main}{Theorem \ref{th:convex-perturbation}}
\newtheorem*{nonconvex-main}{Theorem \ref{th:main-nonconvex}}
\newtheorem*{nonconvex-complexity}{Corollary \ref{th:complexity-nonconvex}}
\newtheorem*{gradient-dominated-main}{Theorem \ref{th:gradient-dominated}}
\newtheorem*{gradient-dominated-corollary}{Corollary \ref{th:gradient-dominated-corollary}}
\newtheorem*{gradient-dominated-strongly-convex}{Corollary \ref{th:gradient-dominated-strongly-convex}}
\newtheorem*{pca-gd}{Theorem \ref{th:pca-gd}}
\newcommand{\Exp}{\mathrm{Exp}}
\newcommand{\Mc}{\mathcal{M}}
\newcommand{\reals}{\mathbb{R}}
\newcommand{\rsvrg}{\textsc{Rsvrg}\xspace}
\newcommand{\nlsum}{\sum\nolimits}
\title{Fast stochastic optimization on Riemannian manifolds}
\author{
  Hongyi Zhang \\
  \texttt{hongyiz@mit.edu} \\
  Massachusetts Institute of Technology \\
  \And Sashank J. Reddi \\
  \texttt{sjakkamr@cs.cmu.edu} \\
  Carnegie Mellon University \\
  \And Suvrit Sra \\
  \texttt{suvrit@mit.edu} \\
  Massachusetts Institute of Technology
}
\begin{document}

\maketitle

\begin{abstract}
  We study optimization of finite sums of \emph{geodesically} smooth functions on Riemannian manifolds. Although variance reduction techniques for optimizing finite-sum problems have witnessed a huge surge of interest in recent years, all existing work is limited to vector space problems. We introduce \emph{Riemannian SVRG}, a new variance reduced Riemannian optimization method. We analyze this method for both geodesically smooth \emph{convex} and \emph{nonconvex} functions. Our analysis reveals that Riemannian SVRG comes with advantages of the usual SVRG method, but with factors depending on manifold curvature that influence its convergence. To the best of our knowledge, ours  is the first \emph{fast} stochastic Riemannian method. Moreover, our work offers the first non-asymptotic complexity analysis for nonconvex Riemannian optimization (even for the batch setting). Our results have several implications; for instance, they offer a Riemannian perspective on variance reduced PCA, which promises a short, transparent convergence analysis.


\end{abstract}

\section{Introduction} \label{sec:introduction}
  We study the following rich class of (possibly nonconvex) finite-sum optimization problems:
\begin{equation} \label{eq:finite-sum}
	\min_{x\in\mathcal{X}\subset\mathcal{M}} \ f(x)\ \triangleq\ \frac{1}{n} \sum_{i=1}^{n} f_i(x),
\end{equation}
where $(\mathcal{M},\mathfrak{g})$ is a Riemannian manifold with the Riemannian metric $\mathfrak{g}$, and $\mathcal{X}$ is a geodesically convex set. We further assume that each $f_i: \mathcal{M}\to\mathbb{R}$ is geodesically $L$-smooth (see \S\ref{sec:functions}). Problem~\eqref{eq:finite-sum} is fundamental to machine learning, where it typically arises in the context of empirical risk minimization, albeit  usually in its vector space incarnation. It also captures numerous widely used problems such as principal component analysis (PCA), independent component analysis (ICA), dictionary learning, mixture modeling, among others (please see the related work section). 

The linear space version of~\eqref{eq:finite-sum} where $\Mc=\reals^d$ and $\mathfrak{g}$ is the Euclidean norm has been the subject of intense algorithmic development in machine learning and optimization, starting with the classical work of~\citet{robmon51} to the recent spate of work on variance reduction methods~\citep{defazio2014saga,schmidt2013minimizing,johnson2013accelerating,konevcny2013semi,reddi2016stochastic}. However, when $(\Mc,\mathfrak{g})$ is a nonlinear Riemannian manifold, much less attention has been paid~\citep{bonnabel2013stochastic,zhang2016first}. 

When solving problems with manifold constraints, one common approach is to alternate between optimizing in the ambient Euclidean space and ``projecting'' 
onto the manifold. For example, two well-known methods to compute the leading eigenvector of symmetric matrices, power iteration and Oja's algorithm~\citep{oja1992principal}, are in essence projected gradient and projected stochastic gradient algorithms. For certain manifolds (e.g., positive definite matrices), projections can be too expensive to compute.

An effective alternative is to use \emph{Riemannian optimization}\footnote{Riemannian optimization is optimization on a \emph{known} manifold structure. Note the distinction from \emph{manifold learning}, which attempts to learn a manifold structure from data. We briefly review some Riemannian optimization applications in the related work.}, which directly operates on the manifold in question. This allows Riemannian optimization to turn the constrained optimization problem~\eqref{eq:finite-sum} into an unconstrained one defined on the manifold, and thus, to be ``projection-free.'' 
More importantly is its conceptual value: viewing a problem through the Riemannian lens, one can discover new insights into the geometry of a problem, which can even lead to better optimization algorithms.

Although the Riemannian approach is very appealing, our knowledge of it is fairly limited. In particular, there is little analysis about its global  complexity (a.k.a. non-asymptotic convergence rate), in part due to the difficulty posed by the nonlinear metric. It is only recently that \citet{zhang2016first} developed the first global complexity analysis of full and stochastic gradient methods for geodesically convex functions. 
However, the batch and stochastic gradient methods in \cite{zhang2016first} suffer from problems similar to their vector space counterparts. For solving finite sum problems with $n$ components, the full-gradient method requires $n$ derivatives at each step; the stochastic method requires only one derivative but at the expense of vastly slower $O(1/\epsilon^2)$ convergence to an $\epsilon$-accurate solution.


These issues have driven much of the recent progress on faster stochastic optimization in vector spaces by using variance reduction \cite{schmidt2013minimizing,johnson2013accelerating,defazio2014saga}. However, all of these works critically rely on properties of vector spaces; thus, using them in the context of Riemannian manifolds poses major challenges. Given the potentially vast scope of Riemannian optimization and its growing number of applications, developing fast stochastic optimization methods for it is very important: it will help us apply Riemannian optimization to large-scale problems, while offering a new set of algorithmic tools for the practitioner's repertoire.



\paragraph{Contributions.}
In light of the above motivation, let us summarize our key contributions below. 

\begin{itemize}[leftmargin=*]
  \setlength{\itemsep}{0pt}
\item We introduce Riemannian SVRG (\rsvrg), a variance reduced Riemannian stochastic gradient method based on SVRG~\cite{johnson2013accelerating}. We analyze \rsvrg for geodesically strongly convex functions through a novel theoretical analysis that accounts for the nonlinear (curved) geometry of the manifold to yield linear convergence rates. 

\item Inspired by the exciting advances in variance reduction for nonconvex optimization \cite{reddi2016stochastic,allen2016variance}, we  generalize the convergence analysis of \rsvrg to (geodesically) nonconvex functions and also to gradient dominated functions (see \S\ref{sec:preliminaries} for the definition). Our analysis provides the first stochastic Riemannian method that is provably superior to both batch and stochastic (Riemannian) gradient methods for nonconvex finite-sum problems. 

\item Using a Riemannian formulation and applying our result for (geodesically) gradient-dominated functions, we provide new insights, and a short transparent analysis 
  explaining fast convergence of variance reduced PCA for computing the leading eigenvector of a symmetric matrix.
\end{itemize}
To our knowledge, this paper provides the first stochastic gradient method with global linear convergence rates for geodesically strongly convex functions, as well as first non-asymptotic convergence rates for geodesically nonconvex optimization (even in the batch case). Our analysis reveals how manifold geometry, in particular its curvature impacts convergence rates. We illustrate the benefits of \rsvrg by showing an application to computing leading eigenvectors of a symmetric matrix, as well as for accelerating the computation of the Riemannian centroid of covariance matrices, a problem that has received great attention in the literature~\citep{bhatia07,jeuVaVa,zhang2016first}.


\paragraph{Related Work.}
Variance reduction techniques, such as \emph{control variates}, are widely used in Monte Carlo simulations \cite{rubinstein2011simulation}. In linear spaces, variance reduced methods for solving finite-sum problems have recently witnessed a huge surge of interest \cite[e.g.][]{schmidt2013minimizing,johnson2013accelerating,defazio2014saga,bach2013non,konevcny2013semi,xiao2014proximal,gong2014linear}. They have been shown to accelerate stochastic optimization for strongly convex objectives, convex objectives, nonconvex $f_i$ ($i\in[n]$), and  
even when both $f$ and $f_i$ ($i\in[n]$) are nonconvex \citep{reddi2016stochastic,allen2016variance}. \citet{reddi2016stochastic} further proved global linear convergence for gradient dominated nonconvex problems. Our analysis is inspired by \cite{johnson2013accelerating,reddi2016stochastic}, but applies to the substantially more general Riemannian optimization setting.

References of Riemannian optimization can be found in \cite{udriste1994convex,absil2009optimization}, where analysis is limited to asymptotic convergence (except \cite[Theorem 4.2]{udriste1994convex} which proved linear rate convergence for first-order line search method with bounded and positive definite hessian).
Stochastic Riemannian optimization has been previously considered in~\cite{bonnabel2013stochastic,liu2004}, though with only asymptotic convergence analysis, and without any rates.
Many applications of Riemannian optimization are known, including matrix factorization on fixed-rank manifold \cite{vandereycken2013low,tan2014riemannian}, dictionary learning \cite{cherian2015riemannian,sun2015complete}, optimization under orthogonality constraints \cite{edelman1998geometry,moakher2002means}, covariance estimation \cite{wiesel2012geodesic}, learning elliptical  distributions \cite{zhang2013multivariate,sra2013geometric}, and Gaussian mixture models \cite{hoSr15b}. Notably, some nonconvex Euclidean problems are geodesically convex, for which Riemannian optimization can provide similar guarantees to convex optimization. \citet{zhang2016first} provide the first global complexity analysis for first-order Riemannian algorithms, but their analysis is restricted to geodesically convex problems with full or stochastic gradients. In contrast, we propose \rsvrg, a variance reduced Riemannian stochastic gradient algorithm, and analyze its global complexity for both geodesically convex and nonconvex problems.

In parallel with our work, \cite{kasai2016riemannian} also proposed and analyzed \rsvrg specifically for the Grassmann manifold, where the complexity analysis is restricted to local convergence to strict local minimums, which essentially corresponds to our analysis of (locally) geodesically strongly convex functions. 

\vspace{-5pt}
\section{Preliminaries} \label{sec:preliminaries}
\vspace{-5pt}
  Before formally discussing Riemannian optimization, let us recall some foundational concepts of Riemannian geometry. For a thorough review one can refer to any classic text, e.g.,\citep{petersen2006riemannian}.

A \emph{Riemannian manifold} $(\mathcal{M}, \mathfrak{g})$ is a real smooth manifold $\mathcal{M}$ equipped with a Riemannain metric $\mathfrak{g}$. The metric $\mathfrak{g}$ induces an inner product structure in each tangent space $T_x\mathcal{M}$ associated with every $x\in\mathcal{M}$.  We denote the inner product of $u,v\in T_x\mathcal{M}$ as $\langle u, v \rangle \triangleq \mathfrak{g}_x(u,v)$; and the norm of $u\in T_x\mathcal{M}$ is defined as $\|u\| \triangleq \sqrt{\mathfrak{g}_x(u,u)}$. The angle between $u,v$ is defined as $\arccos\frac{\langle u, v \rangle}{\|u\|\|v\|}$. A geodesic is a constant speed curve $\gamma: [0,1]\to\mathcal{M}$ that is locally distance minimizing. An exponential map $\Exp_x:T_x\mathcal{M}\to\mathcal{M}$ maps $v$ in $T_x\mathcal{M}$ to $y$ on $\mathcal{M}$, such that there is a geodesic $\gamma$ with $\gamma(0) = x, \gamma(1) = y$ and $\dot{\gamma}(0) \triangleq \frac{d}{dt}\gamma(0) = v$.  If between any two points in $\mathcal{X}\subset\mathcal{M}$ there is a unique geodesic, the exponential map has an inverse $\Exp_x^{-1}:\mathcal{X}\to T_x\mathcal{M}$ and the geodesic is the unique shortest path with $\|\Exp_x^{-1}(y)\| = \|\Exp_y^{-1}(x)\|$ the geodesic distance between $x,y\in\mathcal{X}$.

Parallel transport $\Gamma_x^y: T_x\mathcal{M}\to T_y\mathcal{M}$ maps a vector $v\in T_x\mathcal{M}$ to $\Gamma_x^y v\in T_y\mathcal{M}$, while preserving norm, and roughly speaking, ``direction,'' analogous to translation in $\mathbb{R}^d$. A tangent vector of a geodesic $\gamma$ remains tangent if parallel transported along $\gamma$. Parallel transport preserves inner products. 

\begin{figure}[hbt]
	\centering \def\svgwidth{130pt}
	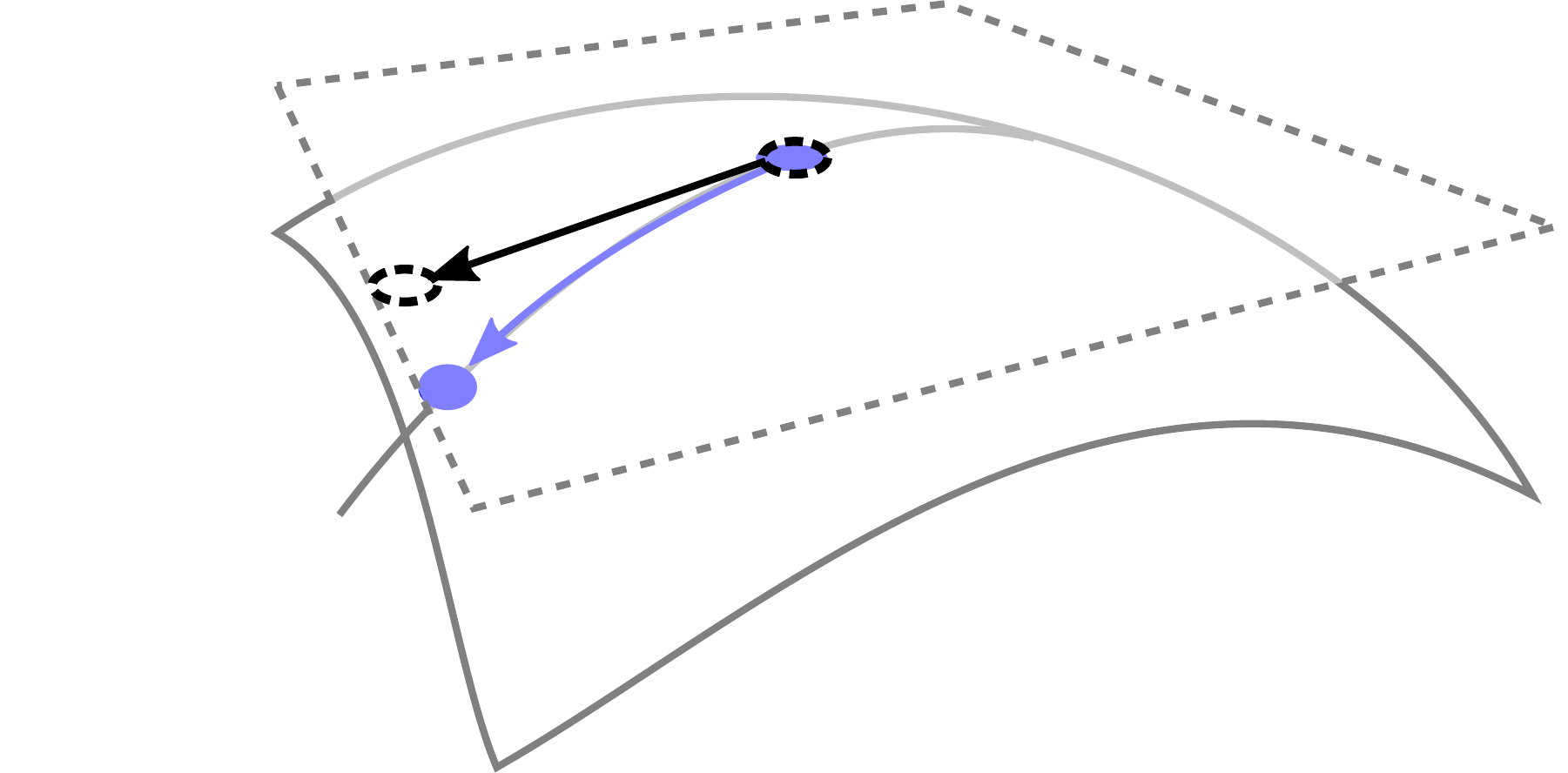 \hspace{50pt} \def\svgwidth{120pt}
	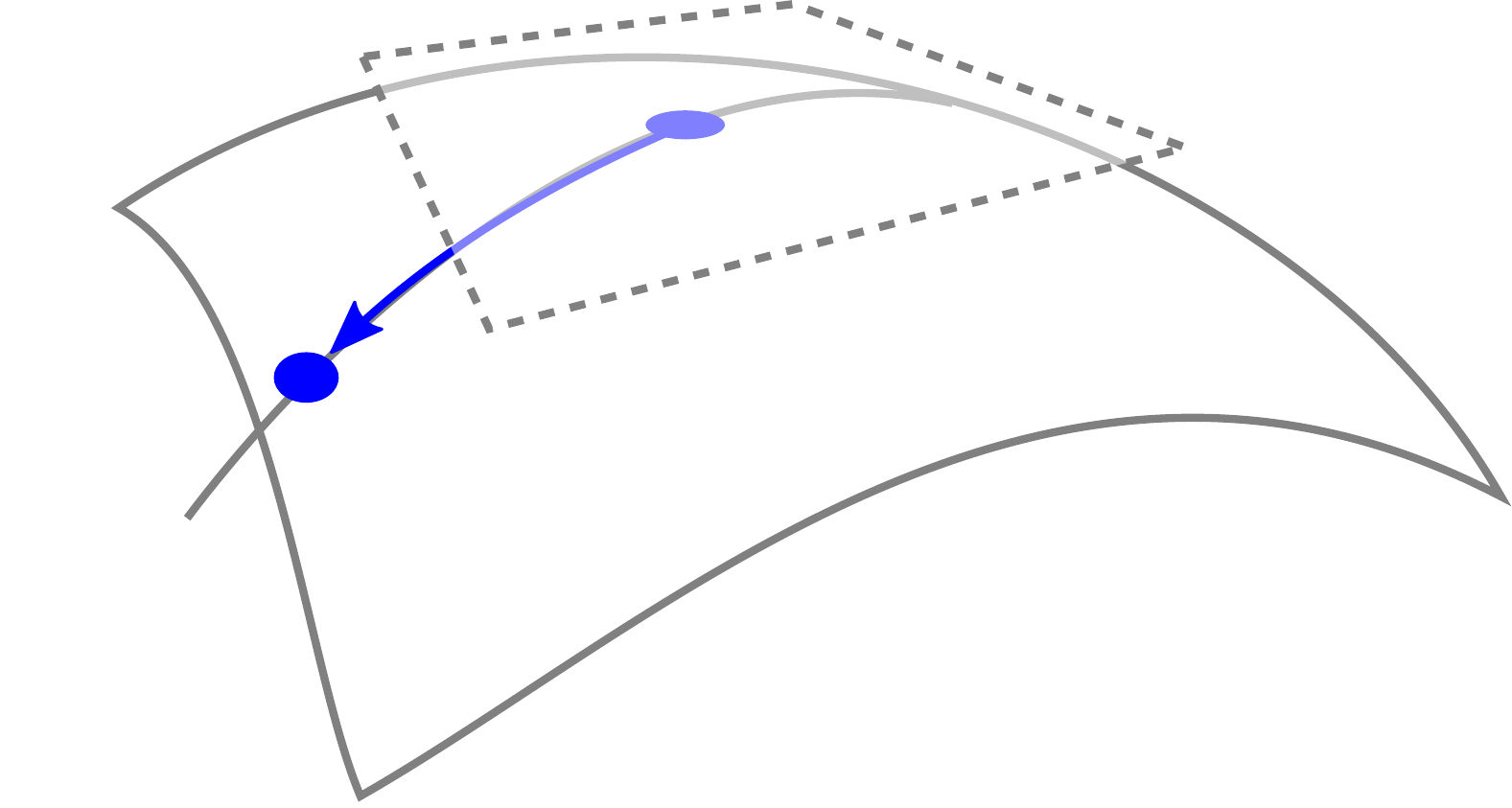
	\caption{\small Illustration of manifold operations. (Left) A vector $v$ in $T_x\mathcal{M}$ is mapped to $\Exp_x(v)$; (right) A vector $v$ in $T_x\mathcal{M}$ is parallel transported to $T_y\mathcal{M}$ as $\Gamma_x^y v$.}
\end{figure}

The geometry of a Riemannian manifold is determined by its Riemannian metric tensor through various characterization of curvatures. Let $u,v\in T_x\mathcal{M}$ be linearly independent, so that they span a two dimensional subspace of $T_x\mathcal{M}$. Under the exponential map, this subspace is mapped to a two dimensional submanifold of $\mathcal{U}\subset\mathcal{M}$. The sectional curvature $\kappa(x,\mathcal{U})$ is defined as the Gauss curvature of $\mathcal{U}$ at $x$. As we will mainly analyze manifold trigonometry, for worst-case analysis, it is sufficient to consider sectional curvature.

\paragraph{Function Classes.}
\label{sec:functions}
We now define some key terms. A set $\mathcal{X}$ is called \emph{geodesically convex} if for any $x,y\in\mathcal{X}$, there is a geodesic $\gamma$ with $\gamma(0) = x, \gamma(1) = y$ and $\gamma(t)\in\mathcal{X}$ for $t\in [0,1]$. Throughout the paper, we assume that the function $f$ in~(\ref{eq:finite-sum}) is defined on a geodesically convex set $\mathcal{X}$ on a Riemannian manifold $\mathcal{M}$. 

	We call a function $f:\mathcal{X}\to\mathbb{R}$ \emph{geodesically convex} (g-convex) if for any $x,y\in\mathcal{X}$ and any geodesic $\gamma$ such that $\gamma(0)=x$, $\gamma(1)=y$ and $\gamma(t)\in\mathcal{X}$ for $t\in [0,1]$, it holds that
		\[ f(\gamma(t)) \le (1-t)f(x) + tf(y). \]
	It can be shown that if the inverse exponential map is well-defined, an equivalent definition is that for any $x,y\in\mathcal{X}$, $f(y) \ge f(x) + \langle g_x, \Exp_x^{-1}(y) \rangle$,
	where $g_x$ is a subgradient of $f$ at $x$ (or the gradient if $f$ is differentiable). A function $f:\mathcal{X}\to\mathbb{R}$ is called \emph{geodesically $\mu$-strongly convex} ($\mu$-strongly g-convex) if for any $x,y\in\mathcal{X}$ and subgradient $g_x$, it holds that
		\[ f(y) \ge f(x) + \langle g_x, \Exp_x^{-1}(y) \rangle + \tfrac{\mu}{2}\|\Exp_x^{-1}(y)\|^2.\]
	We call a vector field $g :\mathcal{X}\to\mathbb{R}^d$ \emph{geodesically $L$-Lipschitz} ($L$-g-Lipschitz) if for any $x,y\in\mathcal{X}$,
		\[ \|g(x) - \Gamma_y^x g(y)\| \le L \|\Exp_x^{-1}(y)\|, \]
	where $\Gamma_y^x$ is the parallel transport from $y$ to $x$. We call a differentiable function $f:\mathcal{X}\to\mathbb{R}$ \emph{geodesically $L$-smooth} ($L$-g-smooth) if its gradient is $L$-g-Lipschitz, in which case we have
		\[ f(y) \le f(x) + \langle g_x, \Exp_x^{-1}(y) \rangle + \tfrac{L}{2}\|\Exp_x^{-1}(y)\|^2. \]
		
                We say $f:\mathcal{X}\to\mathbb{R}$ is \emph{$\tau$-gradient dominated} if $x^*$ is a global minimizer of $f$ and for every $x\in\mathcal{X}$
		\begin{equation} \label{eq:gradient-dominated}
			f(x) - f(x^*) \le \tau \|\nabla f(x)\|^2.
		\end{equation}
		
We recall the following trigonometric distance bound that is essential for our analysis:
\begin{lemma}[\cite{bonnabel2013stochastic,zhang2016first}] \label{th:distance_bound}
  If $a,b,c$ are the side lengths of a geodesic triangle in a Riemannian manifold with sectional curvature lower bounded by $\kappa_{\min}$, and $A$ is the angle between sides $b$ and $c$ (defined through inverse exponential map and inner product in tangent space), then
  \begin{equation} \label{eq:distance_bound}
    a^2 \leq \frac{\sqrt{|\kappa_{\min}|}c}{\tanh(\sqrt{|\kappa_{\min}|}c)}b^2 + c^2 - 2bc\cos(A).
  \end{equation}
\end{lemma}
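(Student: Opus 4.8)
The plan is to reduce \eqref{eq:distance_bound} to the two-dimensional space form of constant curvature $\kappa_{\min}$, and then to a single one-variable estimate. Label the triangle $pqr$ so that the sides of lengths $b$ and $c$ leave the vertex $p$ with opening angle $A$, and $a$ is the side opposite $p$; since we work inside the geodesically convex set $\mathcal X$ where minimizing geodesics are unique, this triangle and the angle $A$ are well defined. By Toponogov's hinge comparison theorem, in a manifold whose sectional curvature is bounded below by $\kappa_{\min}$ the side opposite a hinge is no longer than the side opposite the hinge with the same leg lengths $b,c$ and the same angle $A$ realized in the simply connected $2$-dimensional space form $M_{\kappa_{\min}}$; calling that length $\bar a$ we get $a\le\bar a$, so it suffices to prove $\bar a^{2}\le\zeta\,b^{2}+c^{2}-2bc\cos A$ in $M_{\kappa_{\min}}$, with $\zeta=\sqrt{|\kappa_{\min}|}\,c/\tanh(\sqrt{|\kappa_{\min}|}\,c)$.

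If $\kappa_{\min}\ge 0$ this is immediate: the curvature is then also bounded below by $0$, so comparison with $\reals^{2}$ gives $a^{2}\le b^{2}+c^{2}-2bc\cos A$, while $\tanh t\le t$ for $t\ge 0$ gives $\zeta\ge 1$. So suppose $\kappa_{\min}<0$, set $\lambda=\sqrt{|\kappa_{\min}|}$, and rescale the metric by $\lambda^{2}$; this multiplies all lengths by $\lambda$ and normalizes $\kappa_{\min}$ to $-1$, so $M_{\kappa_{\min}}=\mathbb H^{2}$ and the hyperbolic law of cosines gives $\cosh\bar a=\cosh b\cosh c-\sinh b\sinh c\cos A$. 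Writing the goal as $\bar a^{2}-\bar a_{E}^{2}\le(c\coth c-1)\,b^{2}$, where $\bar a_{E}^{2}=b^{2}+c^{2}-2bc\cos A$ is the Euclidean value, the task reduces to the scalar inequality
\[
\bigl(\mathrm{arccosh}(\cosh b\cosh c-\sinh b\sinh c\cos A)\bigr)^{2}-\bigl(b^{2}+c^{2}-2bc\cos A\bigr)\ \le\ (c\coth c-1)\,b^{2}.
\]

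I would prove this by treating $m=\cos A\in[-1,1]$ as the free variable, with $\bar a=\bar a(m)$. At $m=\pm 1$ one has $\bar a=|b\mp c|=\bar a_{E}$, so the left-hand side vanishes and the inequality holds because the right-hand side is $(c\coth c-1)b^{2}\ge 0$. The interior is the subtle part: $m\mapsto(\mathrm{arccosh}(\alpha-\beta m))^{2}$, with $\alpha=\cosh b\cosh c$ and $\beta=\sinh b\sinh c\ge 0$, is \emph{concave} --- because $s\mapsto(\mathrm{arccosh}\,s)^{2}$ is concave on $[1,\infty)$, since $\sqrt{s^{2}-1}-s\,\mathrm{arccosh}(s)\le 0$ --- while $\bar a_{E}^{2}$ is affine in $m$, so $\bar a^{2}-\bar a_{E}^{2}$ is concave in $m$ and its maximum over $[-1,1]$ may lie in the interior. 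One must therefore bound its value at the stationary point, which after differentiating is the $m$ with $\sinh b\,\sinh c\cdot\bar a(m)=bc\,\sinh\bar a(m)$; this is elementary but rather tight (nearly an equality already for $b=c$). A more geometric route to the same bound, which also explains the $\tanh$, is to integrate $v(t)=\tfrac12 d(q,\eta(t))^{2}$ along the geodesic $\eta$ from $p$ to $r$ (so $|\dot\eta|\equiv b$): then $v(0)=\tfrac12 c^{2}$, $v'(0)=-bc\cos A$, and $v''(t)=\mathrm{Hess}\bigl(\tfrac12 d(q,\cdot)^{2}\bigr)_{\eta(t)}(\dot\eta,\dot\eta)$, whose eigenvalues in curvature $-1$ are $1$ radially and $\rho(t)/\tanh\rho(t)$ tangentially with $\rho(t)=d(q,\eta(t))$; Taylor's formula with integral remainder, $v(1)=v(0)+v'(0)+\int_{0}^{1}(1-t)v''(t)\,dt$, together with $v''(t)\le b^{2}\rho(t)/\tanh\rho(t)$ and convexity of $\rho$ (so $\rho(t)\le\max(c,\bar a)$), yields exactly the claimed bound when $\bar a\le c$.

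I expect the main obstacle to be this last estimate. Because the discrepancy is concave in $\cos A$ --- equivalently, the gap $\zeta b^{2}+c^{2}-2bc\cos A-\bar a^{2}$ is convex in $\cos A$ --- one cannot just interpolate between the two easy endpoint configurations, and one must actually control the worst-case angle; in the geometric formulation the matching difficulty is bounding $\rho(t)$ along the integrated side when $\bar a>c$. Everything before this step --- the Toponogov reduction, the nonnegative-curvature case, and the passage to the hyperbolic law of cosines --- is routine.
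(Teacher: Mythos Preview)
The paper does not prove Lemma~\ref{th:distance_bound}; it is quoted from \cite{bonnabel2013stochastic,zhang2016first} and invoked as a black box in the proofs of Theorem~\ref{th:theorem_stronglyconvex} and Lemma~\ref{th:lyapunov_lemma}. There is therefore no in-paper argument to compare your proposal against, and for a complete proof you should consult those references directly.

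On its own merits: your Toponogov hinge reduction to $M_{\kappa_{\min}}$ and the disposal of the case $\kappa_{\min}\ge 0$ are correct and standard. For $\kappa_{\min}<0$ you have set up the right target inequality in $\mathbb H^{2}$ but, as you yourself acknowledge, have not closed it. Your second-variation route yields
\[
\tfrac12\bar a^{2}\ \le\ \tfrac12 c^{2}-bc\cos A+\tfrac12 b^{2}\,\rho_{\max}\coth\rho_{\max},\qquad \rho_{\max}=\max(c,\bar a),
\]
since convexity of $t\mapsto\rho(t)$ forces the maximum to an endpoint; this delivers the claim only when $\bar a\le c$, and when $\bar a>c$ the resulting coefficient $\bar a\coth\bar a$ strictly exceeds $c\coth c$, so the argument stalls. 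This is a real gap, not a cosmetic one: for $b=c$ and $A=\pi/2$ both sides of \eqref{eq:distance_bound} agree through fourth order in $c$, so there is no slack to absorb a crude bound on $\rho(t)$. Your alternative route --- locate the interior maximizer in $\cos A$ and bound the discrepancy there --- is in principle viable, but you have only written down the stationarity condition $\bar a\,\sinh b\,\sinh c=bc\,\sinh\bar a$ and asserted that the remaining estimate is ``elementary''; that estimate is precisely where the work lies, and you have not supplied it.
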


An \emph{Incremental First-order Oracle (IFO)} \cite{agarwal2015lower} in (\ref{eq:finite-sum}) takes an $i\in[n]$ and a point $x\in\mathcal{X}$, and returns a pair $(f_i(x), \nabla f_i(x))\in \mathbb{R}\times T_x\mathcal{M}$. We measure non-asymptotic complexity in terms of IFO calls.
	


\section{Riemannian SVRG} \label{sec:main}
  In this section we introduce \rsvrg formally. We make the following standing assumptions: (a) $f$ attains its optimum at $x^*\in\mathcal{X}$; (b) $\mathcal{X}$ is compact, and the diameter of $\mathcal{X}$ is bounded by $D$, that is, $\max_{x,y\in\mathcal{X}} d(x,y) \le D$; (c) the sectional curvature in $\mathcal{X}$ is \emph{upper} bounded by $\kappa_{\max}$, and within $\mathcal{X}$ the exponential map is invertible; and (d) the sectional curvature in $\mathcal{X}$ is \emph{lower} bounded by $\kappa_{\min}$. We define the following key geometric constant that capture the impact of manifold curvature:
  \begin{equation} \label{eq:zeta-sigma}
	  \boxed{\zeta= \left\{\begin{array}{ll}
		  \frac{\sqrt{|\kappa_{\min}|}D}{\tanh(\sqrt{|\kappa_{\min}|}D)}, & \text{if } \kappa_{\min} < 0, \\
		  1, & \text{if } \kappa_{\min} \ge 0,
		  \end{array}\right.
		  }
  \end{equation}
We note that most (if not all) practical manifold optimization problems can satisfy these assumptions.

Our proposed \rsvrg algorithm is shown in Algorithm \ref{alg:rsvrg}. Compared with the Euclidean SVRG, it differs in two key aspects: the variance reduction step uses parallel transport to combine gradients from different tangent spaces; and the exponential map is used (instead of the update $x_t^{s+1}-\eta v_t^{s+1}$).



\begin{algorithm}[ht] \label{alg:rsvrg}\small
	\caption{\small \rsvrg($x^0, m, \eta, S$)}
	\SetAlgoLined
	\SetKwInput{KwData}{Parameters}
	\KwData{update frequency $m$, learning rate $\eta$, number of epochs $S$}
	initialize $\tilde{x}^0 = x^0$\;
	\For{$s=0,1,\dots,S-1$}{
		$x_{0}^{s+1} = \tilde{x}^{s} $\;
		$g^{s+1} = \frac{1}{n}\sum_{i=1}^n \nabla f_i(\tilde{x}^s)$\;
		\For{$t=0,1,\dots,m-1$}{
			Randomly pick $i_t\in\{1,\dots,n\}$\;
			 $v_t^{s+1} = \nabla f_{i_t}(x_{t}^{s+1}) - \Gamma_{\tilde{x}^s}^{x_{t}^{s+1}}\left(\nabla f_{i_t}(\tilde{x}^s) - g^{s+1}\right)$\;
			 $x_{t+1}^{s+1} = \Exp_{x_{t}^{s+1}}\left(-\eta v_t^{s+1} \right)$\;
		}
		Set $\tilde{x}^{s+1} = x_m^{s+1}$\;
	}
	{\bf Option I:} output $x_a = \tilde{x}^{S}$\;
	{\bf Option II:} output $x_a$ chosen uniformly randomly from $\{\{x_t^{s+1}\}_{t=0}^{m-1}\}_{s=0}^{S-1}$.
\end{algorithm}
\setlength{\textfloatsep}{0pt}
\subsection{Convergence analysis for strongly g-convex functions} \label{sec:main-convex}
In this section, we analyze global complexity of \rsvrg for solving (\ref{eq:finite-sum}), where each $f_i$ ($i\in[n]$) is g-smooth and $f$ is strongly g-convex. In this case, we show that \rsvrg has linear convergence rate. This is in contrast with the $O(1/t)$ rate of Riemannian stochastic gradient algorithm for strongly g-convex functions \cite{zhang2016first}.

\begin{theorem} \label{th:theorem_stronglyconvex}
Assume in (\ref{eq:finite-sum}) each $f_i$ is $L$-g-smooth, and $f$ is $\mu$-strongly g-convex, then if we run Algorithm \ref{alg:rsvrg} with Option I and parameters that satisfy
\[ \alpha = \frac{3\zeta\eta L^2}{\mu - 2\zeta\eta L^2} + \frac{(1+4\zeta\eta^2-2\eta\mu)^m(\mu - 5\zeta\eta L^2)}{\mu - 2\zeta\eta L^2} < 1 \]
then with $S$ outer loops, the Riemannian SVRG algorithm produces an iterate $x_a$ that satisfies
\[ \mathbb{E}d^2(x_a,x^*) \le \alpha^S d^2(x^0,x^*). \]
\end{theorem}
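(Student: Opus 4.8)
The plan is to follow the template of the Euclidean SVRG analysis, replacing the identity $\|x-\eta v-x^*\|^2=\|x-x^*\|^2-2\eta\langle v,x-x^*\rangle+\eta^2\|v\|^2$ by the trigonometric comparison of Lemma~\ref{th:distance_bound}. For one inner step, apply Lemma~\ref{th:distance_bound} to the geodesic triangle with vertices $x_t^{s+1}$, $x_{t+1}^{s+1}=\Exp_{x_t^{s+1}}(-\eta v_t^{s+1})$ and $x^*$: take the side from $x_t^{s+1}$ to $x_{t+1}^{s+1}$ (length $\eta\|v_t^{s+1}\|$, initial direction $-\eta v_t^{s+1}$) as $b$, the side from $x_t^{s+1}$ to $x^*$ (length $d(x_t^{s+1},x^*)$) as $c$, and $A$ the angle between them, so that $bc\cos A=\langle-\eta v_t^{s+1},\Exp_{x_t^{s+1}}^{-1}(x^*)\rangle$. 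Since $d(x_t^{s+1},x^*)\le D$ and $t\mapsto t/\tanh t$ is nondecreasing (with the case $\kappa_{\min}\ge0$ giving the sharper Euclidean-type inequality), the curvature factor is at most $\zeta$, yielding
\[
 d^2(x_{t+1}^{s+1},x^*)\ \le\ d^2(x_t^{s+1},x^*)+2\eta\langle v_t^{s+1},\Exp_{x_t^{s+1}}^{-1}(x^*)\rangle+\zeta\eta^2\|v_t^{s+1}\|^2.
\]

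Next, take expectation over the index $i_t$ sampled at step $t$. Since $g^{s+1}=\tfrac1n\sum_i\nabla f_i(\tilde{x}^s)=\nabla f(\tilde{x}^s)$ and parallel transport is linear, $v_t^{s+1}$ is an unbiased estimate of $\nabla f(x_t^{s+1})$, so the middle term becomes $2\eta\langle\nabla f(x_t^{s+1}),\Exp_{x_t^{s+1}}^{-1}(x^*)\rangle$. Bound it with $\mu$-strong g-convexity applied twice: the subgradient inequality gives $\langle\nabla f(x_t^{s+1}),\Exp_{x_t^{s+1}}^{-1}(x^*)\rangle\le f(x^*)-f(x_t^{s+1})-\tfrac{\mu}{2}d^2(x_t^{s+1},x^*)$, while strong convexity at the minimizer ($\nabla f(x^*)=0$) gives $f(x_t^{s+1})-f(x^*)\ge\tfrac{\mu}{2}d^2(x_t^{s+1},x^*)$; together, $2\eta\langle\nabla f(x_t^{s+1}),\Exp_{x_t^{s+1}}^{-1}(x^*)\rangle\le-2\eta\mu\,d^2(x_t^{s+1},x^*)$.

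The crux is bounding $\mathbb{E}\|v_t^{s+1}\|^2$. The obstacle relative to the vector-space proof is that one cannot ``insert $\nabla f_{i_t}(x^*)$'' cleanly, because parallel transport is path-dependent; I would instead split $v_t^{s+1}=\big(\nabla f_{i_t}(x_t^{s+1})-\Gamma_{\tilde{x}^s}^{x_t^{s+1}}\nabla f_{i_t}(\tilde{x}^s)\big)+\Gamma_{\tilde{x}^s}^{x_t^{s+1}}g^{s+1}$ and use $\|a+b\|^2\le2\|a\|^2+2\|b\|^2$. The first term is bounded by the $L$-g-Lipschitz property of $\nabla f_{i_t}$, namely $L\,d(x_t^{s+1},\tilde{x}^s)$; the second by norm-preservation of parallel transport and $L$-g-smoothness of $f$ at $x^*$, namely $\|g^{s+1}\|=\|\nabla f(\tilde{x}^s)\|\le L\,d(\tilde{x}^s,x^*)$. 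A triangle inequality $d^2(x_t^{s+1},\tilde{x}^s)\le2d^2(x_t^{s+1},x^*)+2d^2(\tilde{x}^s,x^*)$ then yields $\mathbb{E}\|v_t^{s+1}\|^2\le4L^2d^2(x_t^{s+1},x^*)+6L^2d^2(\tilde{x}^s,x^*)$ --- the key point being that this vanishes as the iterates reach $x^*$, which is the variance-reduction mechanism. The cruder split, forced by curvature, is why the constants $4$ and $6$ appear instead of the $2,2$ of Euclidean SVRG, and hence why $\alpha$ carries the terms $4\zeta\eta^2L^2$ and $3\zeta\eta L^2$.

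Assembling the three bounds and taking full expectation, with $\rho:=1-2\eta\mu+4\zeta\eta^2L^2$ and $\beta:=6\zeta\eta^2L^2$ I obtain the one-step recursion $\mathbb{E}\,d^2(x_{t+1}^{s+1},x^*)\le\rho\,\mathbb{E}\,d^2(x_t^{s+1},x^*)+\beta\,\mathbb{E}\,d^2(\tilde{x}^s,x^*)$. Because $x_0^{s+1}=\tilde{x}^s$ and $x_m^{s+1}=\tilde{x}^{s+1}$, unrolling over $t=0,\dots,m-1$ gives $\mathbb{E}\,d^2(\tilde{x}^{s+1},x^*)\le\big(\rho^m+\beta\tfrac{1-\rho^m}{1-\rho}\big)\mathbb{E}\,d^2(\tilde{x}^s,x^*)$; using $1-\rho=2\eta(\mu-2\zeta\eta L^2)$ a short computation identifies the bracketed factor with the stated $\alpha$ (the parameter hypothesis forces $\mu-2\zeta\eta L^2>0$, so $\rho<1$ and the geometric sum is legitimate, and also $\alpha<1$). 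Finally, unrolling over the $S$ outer epochs and noting that Option~I returns $x_a=\tilde{x}^S$ gives $\mathbb{E}\,d^2(x_a,x^*)\le\alpha^S\,d^2(x^0,x^*)$. The main technical hurdle is the first step --- correctly setting up the geodesic triangle and extracting $\zeta$ from Lemma~\ref{th:distance_bound} --- with the second being the careful handling of parallel transports in the second-moment bound.
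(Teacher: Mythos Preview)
Your proposal is correct and matches the paper's proof essentially step for step: the same application of Lemma~\ref{th:distance_bound} to produce the one-step inequality with the curvature factor $\zeta$, the same two-fold use of $\mu$-strong g-convexity to turn the cross term into $-2\eta\mu\,d^2(x_t^{s+1},x^*)$, the identical split of $v_t^{s+1}$ yielding the bound $4L^2 d^2(x_t^{s+1},x^*)+6L^2 d^2(\tilde{x}^s,x^*)$, and the same linear recursion $u_{t+1}\le q u_t+(1-q)p\,u_0$ unrolled to $u_m\le(p+q^m(1-p))u_0=\alpha u_0$. The only cosmetic difference is that the paper writes the second piece of $v_t^{s+1}$ as $\Gamma_{\tilde{x}^s}^{x_t^{s+1}}\big(\nabla f(\tilde{x}^s)-\Gamma_{x^*}^{\tilde{x}^s}\nabla f(x^*)\big)$ before invoking $L$-g-Lipschitzness, whereas you use $\nabla f(x^*)=0$ immediately; the resulting bound is identical.
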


The proof of Theorem \ref{th:theorem_stronglyconvex} is in the appendix, and takes a different route compared with the original SVRG proof \cite{johnson2013accelerating}. Specifically, due to the nonlinear Riemannian metric, we are not able to bound the squared norm of the variance reduced gradient by $f(x) - f(x^*)$. Instead, we bound this quantity by the squared distances to the minimizer, and show linear convergence of the iterates. A bound on $\mathbb{E}[f(x) - f(x^*)]$ is then implied by $L$-g-smoothness, albeit with a stronger dependency on the condition number. Theorem \ref{th:theorem_stronglyconvex} leads to the following more digestible corollary on the global complexity of the algorithm:

\begin{corollary} \label{th:convex-corollary}
	With assumptions as in Theorem \ref{th:theorem_stronglyconvex} and properly chosen parameters, after $O\left((n+\frac{\zeta L^2}{\mu^2})\log(\frac{1}{\epsilon})\right)$ IFO calls, the output $x_a$ satisfies
\[ \mathbb{E}[f(x_a) - f(x^*)] \leq \epsilon. \]
\end{corollary}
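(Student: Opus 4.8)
The plan is to lift the squared-distance contraction of Theorem~\ref{th:theorem_stronglyconvex} to a bound on $\mathbb{E}[f(x_a)-f(x^*)]$, then choose the free parameters $\eta,m,S$ and count IFO calls. First, since $x^*$ minimizes $f$ over the geodesically convex set $\mathcal{X}$ and $f$ is differentiable, $\nabla f(x^*)=0$; applying the $L$-g-smoothness inequality with $x=x^*$ and $y=x_a$ gives $f(x_a)-f(x^*)\le \tfrac{L}{2}\,d^2(x_a,x^*)$. Taking expectations, invoking Theorem~\ref{th:theorem_stronglyconvex}, and using the diameter bound $d(x^0,x^*)\le D$, we obtain $\mathbb{E}[f(x_a)-f(x^*)]\le \tfrac{L}{2}\,\alpha^S D^2$.

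Second, I would pin down a parameter regime in which $\alpha$ is an absolute constant bounded away from $1$. Take $\eta = c\,\mu/(\zeta L^2)$ for a sufficiently small constant $c>0$. Then $\mu-2\zeta\eta L^2=(1-2c)\mu>0$ and $\mu-5\zeta\eta L^2=(1-5c)\mu>0$, so both denominators in the definition of $\alpha$ are positive; the first summand equals the constant $3c/(1-2c)$, which is $\le \tfrac14$ once $c$ is small; and $(\mu-5\zeta\eta L^2)/(\mu-2\zeta\eta L^2)=(1-5c)/(1-2c)\le1$. For the geometric factor, $1+4\zeta\eta^2-2\eta\mu = 1-\tfrac{c\mu^2}{\zeta L^2}\bigl(2-\tfrac{4c}{L^2}\bigr)$, which lies in $(0,1)$ when $c$ is small (using $\mu\le L$ and $\zeta\ge1$), so $(1+4\zeta\eta^2-2\eta\mu)^m\le \exp\bigl(-\Omega(m\mu^2/(\zeta L^2))\bigr)$. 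Choosing $m=\Theta(\zeta L^2/\mu^2)$ large enough makes the second summand $\le\tfrac14$ as well, hence $\alpha\le\tfrac12<1$ and Theorem~\ref{th:theorem_stronglyconvex} applies.

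Third, I would count oracle calls. Each outer iteration uses $n$ IFO calls to form $g^{s+1}$ and a constant number of component-gradient evaluations per inner step, i.e.\ $O(n+m)$ IFO calls per epoch. Since $\alpha\le\tfrac12$, the bound $\tfrac{L}{2}\alpha^S D^2\le\epsilon$ holds as soon as $S=O(\log(LD^2/\epsilon))=O(\log(1/\epsilon))$. Multiplying the per-epoch cost by the number of epochs gives a total of $O\bigl(S(n+m)\bigr)=O\bigl((n+\zeta L^2/\mu^2)\log(1/\epsilon)\bigr)$ IFO calls, which is exactly the claimed bound.

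The only delicate step is the second one: one must verify that a single scaling $\eta=\Theta(\mu/(\zeta L^2))$ simultaneously keeps both denominators positive, pushes the curvature-independent term $3\zeta\eta L^2/(\mu-2\zeta\eta L^2)$ below a constant, and makes the per-epoch geometric factor $1+4\zeta\eta^2-2\eta\mu$ strictly less than $1$ with a gap of order $\mu^2/(\zeta L^2)$. It is precisely this last requirement that forces $m=\Theta(\zeta L^2/\mu^2)$, and hence explains the $L^2/\mu^2$ condition-number dependence (a factor $L/\mu$ worse than Euclidean SVRG) already flagged in the discussion after Theorem~\ref{th:theorem_stronglyconvex}.
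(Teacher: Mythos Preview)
Your proposal is correct and follows essentially the same route as the paper: convert the distance contraction to a function-value bound via $L$-g-smoothness (using $\nabla f(x^*)=0$), fix $\eta=\Theta(\mu/(\zeta L^2))$ so that the first term of $\alpha$ is a small absolute constant, take $m=\Theta(\zeta L^2/\mu^2)$ to make the $q^m$ term small, and then count $O(n+m)$ IFO calls per epoch over $O(\log(1/\epsilon))$ epochs. The paper simply instantiates this with the explicit choices $\eta=\mu/(17\zeta L^2)$ and $m\ge 10\zeta L^2/\mu^2$ (and its proof of Theorem~\ref{th:theorem_stronglyconvex} actually uses $q=1+4\zeta\eta^2L^2-2\eta\mu$, so the missing $L^2$ in the stated $\alpha$ is a typo you inherited; your computation of the gap should read $1-\tfrac{c\mu^2}{\zeta L^2}(2-4c)$, which cleanly gives the $\Omega(\mu^2/(\zeta L^2))$ gap without any appeal to the size of $L$).
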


We give a proof with specific parameter choices in the appendix. Observe the dependence on $\zeta$ in our result: for $\kappa_{\min}<0$, we have $\zeta>1$, which implies that negative space curvature adversarially affects convergence rate; while for $\kappa_{\min}\ge 0$, we have $\zeta=1$, which implies that for nonnegatively curved manifolds, the impact of curvature is not explicit. In the rest of our analysis we will see a similar effect of sectional curvature; this phenomenon seems innate to manifold optimization (also see \cite{zhang2016first}).

In the analysis we do not assume each $f_i$ to be g-convex, which resulted in a worse dependence on the condition number. We note that a similar result was obtained in linear space \cite{garber2015fast}. However, we will see in the next section that by generalizing the analysis for gradient dominated functions in \cite{reddi2016stochastic}, we are able to greatly improve this dependence.

  \subsection{Convergence analysis for geodesically nonconvex functions} \label{sec:main-nonconvex}
In this section, we analyze global complexity of \rsvrg for solving (\ref{eq:finite-sum}), where each $f_i$ is only required to be $L$-g-smooth, and neither $f_i$ nor $f$ need be g-convex. We measure convergence to a stationary point using $\|\nabla f(x)\|^2$ following \cite{ghadimi2013stochastic}. Note, however, that here $\nabla f(x)\in T_x\mathcal{M}$ and $\|\nabla f(x)\|$ is defined via the inner product in $T_x\mathcal{M}$. We first note that Riemannian-SGD on nonconvex $L$-g-smooth problems attains $O(1/\epsilon^2)$ convergence as SGD~\cite{ghadimi2013stochastic} holds; we relegate the details to the appendix. 

Recently, two groups independently proved that variance reduction also benefits stochastic gradient methods for nonconvex smooth finite-sum optimization problems, with different analysis \cite{allen2016variance,reddi2016stochastic}. Our analysis for nonconvex \rsvrg is inspired by \cite{reddi2016stochastic}. Our main result for this section is Theorem~\ref{th:main-nonconvex}.

\begin{theorem} \label{th:main-nonconvex}
	Assume in (\ref{eq:finite-sum}) each $f_i$ is $L$-g-smooth, the sectional curvature in $\mathcal{X}$ is lower bounded by $\kappa_{\min}$, and we run Algorithm \ref{alg:rsvrg} with Option II. Then there exist universal constants $\mu_0 \in (0,1),\nu > 0$ such that if we set $\eta=\mu_0 / (Ln^{\alpha_1}\zeta^{\alpha_2})$ ($0<\alpha_1\le 1$ and $0\le \alpha_2 \le 2$), $m=\lfloor n^{3\alpha_1/2} / (3\mu_0\zeta^{1-2\alpha_2})\rfloor$ and $T = mS$, we have 
		\[ \mathbb{E}[\|\nabla f(x_a)\|^2] \le \tfrac{Ln^{\alpha_1}\zeta^{\alpha_2}[f(x^0) - f(x^*)]}{T\nu}, \]
	where $x^*$ is an optimal solution to~\eqref{eq:finite-sum}.
\end{theorem}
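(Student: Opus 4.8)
The plan is to follow the Euclidean nonconvex SVRG analysis of \cite{reddi2016stochastic}, but to replace every vector-space manipulation by its Riemannian counterpart, using $L$-g-smoothness in place of ordinary $L$-smoothness and the trigonometric distance bound (Lemma~\ref{th:distance_bound}) wherever the update step $x_{t+1}^{s+1}=\Exp_{x_t^{s+1}}(-\eta v_t^{s+1})$ interacts with distances to the previous anchor $\tilde x^s$. Concretely, I would first use $L$-g-smoothness of $f$ along the geodesic from $x_t^{s+1}$ to $x_{t+1}^{s+1}$ to get the descent-type inequality
\[
  \mathbb{E}[f(x_{t+1}^{s+1})] \le \mathbb{E}\Big[f(x_t^{s+1}) - \eta\langle \nabla f(x_t^{s+1}), v_t^{s+1}\rangle + \tfrac{L\eta^2}{2}\|v_t^{s+1}\|^2\Big],
\]
and then take the conditional expectation over $i_t$; since parallel transport preserves inner products and $\mathbb{E}[v_t^{s+1}]=\nabla f(x_t^{s+1})$ (the variance-reduction correction is mean-zero in the transported tangent space), the cross term becomes $-\eta\|\nabla f(x_t^{s+1})\|^2$.

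The crux is controlling $\mathbb{E}\|v_t^{s+1}\|^2$. Writing $v_t^{s+1}=\nabla f_{i_t}(x_t^{s+1})-\Gamma_{\tilde x^s}^{x_t^{s+1}}(\nabla f_{i_t}(\tilde x^s)-g^{s+1})$ and using $L$-g-Lipschitzness of each $\nabla f_{i_t}$ together with norm-preservation of $\Gamma$, one bounds this by $2\|\nabla f(x_t^{s+1})\|^2 + 2L^2 d^2(x_t^{s+1},\tilde x^s)$ (up to the usual constants). Next I would introduce a Lyapunov function of the form $R_t^{s+1}=\mathbb{E}[f(x_t^{s+1})+c_t\, d^2(x_t^{s+1},\tilde x^s)]$ with a recursively defined sequence $c_t$, exactly as in \cite{reddi2016stochastic}. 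To propagate the $d^2(x_{t+1}^{s+1},\tilde x^s)$ term one step, I invoke Lemma~\ref{th:distance_bound} with the geodesic triangle $(\tilde x^s, x_t^{s+1}, x_{t+1}^{s+1})$: the side from $x_t^{s+1}$ to $x_{t+1}^{s+1}$ has length $\eta\|v_t^{s+1}\|$, and the curvature factor is bounded by $\zeta$ (this is precisely where $\zeta$ from~\eqref{eq:zeta-sigma} enters, since $d(\cdot,\cdot)\le D$). This yields $d^2(x_{t+1}^{s+1},\tilde x^s)\le (1+\beta)d^2(x_t^{s+1},\tilde x^s) + (1+\tfrac1\beta)\zeta\eta^2\|v_t^{s+1}\|^2$ for a free parameter $\beta>0$, and substituting the bound on $\mathbb{E}\|v_t^{s+1}\|^2$ gives the recursion $c_t = c_{t+1}(1+\beta+2(1+\tfrac1\beta)\zeta\eta^2 L^2) + (1+\tfrac1\beta)\zeta\eta^2 L^3$, with $c_m=0$. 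Choosing $\beta = 1/m$ keeps $c_0$ bounded; one then shows $R_{t+1}^{s+1}\le R_t^{s+1} - \Gamma_t\|\nabla f(x_t^{s+1})\|^2$ for some $\Gamma_t>0$ uniformly bounded below by $\nu/(Ln^{\alpha_1}\zeta^{\alpha_2})$ provided $\mu_0$ is small enough and $m$ has the stated form.

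Finally, I would telescope the per-step inequality over $t=0,\dots,m-1$ within each epoch and over $s=0,\dots,S-1$ across epochs, using that $x_0^{s+1}=\tilde x^s$ (so the $d^2$ term vanishes at the start of each epoch) and that $d^2(x_m^{s+1},\tilde x^{s+1})=0$ since $\tilde x^{s+1}=x_m^{s+1}$. This telescoping collapses to $\sum_{s,t}\Gamma_t\,\mathbb{E}\|\nabla f(x_t^{s+1})\|^2 \le f(x^0)-f(x^*)$ (using $f(\tilde x^S)\ge f(x^*)$), and since $x_a$ is drawn uniformly from the $T=mS$ iterates, dividing by $T\min_t\Gamma_t$ gives the claimed bound. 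The main obstacle, as in the strongly convex case, is that — unlike Euclidean SVRG — we cannot bound $\mathbb{E}\|v_t^{s+1}\|^2$ by $f$-gap quantities; we are forced to carry the geometric distance $d^2(x_t^{s+1},\tilde x^s)$ through the Lyapunov argument, and verifying that the curvature-inflated recursion for $c_t$ still admits the parameter regime $\eta=\mu_0/(Ln^{\alpha_1}\zeta^{\alpha_2})$, $m=\lfloor n^{3\alpha_1/2}/(3\mu_0\zeta^{1-2\alpha_2})\rfloor$ with universal $\mu_0,\nu$ is the delicate bookkeeping step.
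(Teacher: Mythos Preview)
Your overall architecture---Lyapunov function $R_t^{s+1}=\mathbb{E}[f(x_t^{s+1})+c_t\,d^2(x_t^{s+1},\tilde x^s)]$, the variance bound $\mathbb{E}\|v_t^{s+1}\|^2\le 2\|\nabla f(x_t^{s+1})\|^2+2L^2 d^2(x_t^{s+1},\tilde x^s)$, and telescoping across epochs---matches the paper. But the way you propagate $d^2(x_{t+1}^{s+1},\tilde x^s)$ is too coarse, and with the stated parameters the argument breaks.

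You apply Young's inequality to the cross term of Lemma~\ref{th:distance_bound} \emph{before} taking expectation, at the level of $v_t^{s+1}$, obtaining
\[
  d^2(x_{t+1}^{s+1},\tilde x^s)\le (1+\beta)\,d^2(x_t^{s+1},\tilde x^s) + (1+\tfrac1\beta)\zeta\eta^2\|v_t^{s+1}\|^2.
\]
After substituting the variance bound, the growth factor in the $c_t$ recursion becomes $1+\beta+2(1+\tfrac1\beta)\zeta\eta^2L^2$. With $\beta=1/m$ this contains the term $2m\zeta\eta^2L^2$; compounded over $m$ steps you need $m^2\zeta\eta^2L^2=O(1)$. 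For the stated $\eta=\mu_0/(Ln^{\alpha_1}\zeta^{\alpha_2})$ and $m=\Theta(n^{3\alpha_1/2}/\zeta^{1-2\alpha_2})$ one computes $m^2\zeta\eta^2L^2=\Theta(n^{\alpha_1})$, so $(1+\theta)^m$ blows up, $c_0$ is unbounded, and $\Gamma_t$ becomes negative. No choice of $\beta$ rescues this: balancing $m\beta$ against $m\zeta\eta^2L^2/\beta$ forces $m=O(1/(\eta L\sqrt{\zeta}))=O(n^{\alpha_1})$, strictly smaller than the $n^{3\alpha_1/2}$ the theorem requires.

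The paper's fix is to take the conditional expectation \emph{first}: since $\mathbb{E}[v_t^{s+1}\mid x_t^{s+1}]=\nabla f(x_t^{s+1})$, the cross term in Lemma~\ref{th:distance_bound} becomes $2\eta\langle\nabla f(x_t^{s+1}),\Exp_{x_t^{s+1}}^{-1}(\tilde x^s)\rangle$, and only then is Young's applied, with a $\beta$ of order $L/n^{\alpha_1/2}$. This places the $1/\beta$ loss on the $\|\nabla f\|^2$ coefficient (which merely has to stay positive---it does not compound) rather than on the $\|v_t\|^2$ coefficient that feeds back into the $c_t$ recursion. The resulting recursion is $c_t=c_{t+1}(1+\eta\beta+2\zeta L^2\eta^2)+L^3\eta^2$, whose per-step increment $\theta=\eta\beta+2\zeta L^2\eta^2$ satisfies $m\theta<1$ for the stated $m$, giving $c_0=O(L/n^{\alpha_1/2})$ and $\delta_n\ge\nu/(Ln^{\alpha_1}\zeta^{\alpha_2})$ for universal $\mu_0,\nu$. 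That ordering---expectation before Young---is the step your proposal is missing.
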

The key challenge in proving Theorem~\ref{th:main-nonconvex} in the Riemannian setting is to incorporate the impact of using a nonlinear metric. Similar to the g-convex case, the nonlienar metric impacts the convergence, notably through the constant $\zeta$ that depends on a lower-bound on sectional curvature. 

\citet{reddi2016stochastic} suggested setting $\alpha_1 = 2/3$, in which case we obtain the following corollary. 
\begin{corollary} \label{th:complexity-nonconvex}
	With assumptions and parameters in Theorem \ref{th:main-nonconvex}, choosing $\alpha_1 = 2/3$, the IFO complexity for achieving an $\epsilon$-accurate solution is:
		\[ \text{IFO calls} = \left\{ \begin{array}{lr}
			O\left(n + (n^{2/3}\zeta^{1-\alpha_2}/\epsilon)\right), & \text{if } \alpha_2 \le 1/2, \\
			O\left(n\zeta^{2\alpha_2-1} + (n^{2/3}\zeta^{\alpha_2}/\epsilon)\right), & \text{if } \alpha_2 > 1/2.
		\end{array} \right. \]
\end{corollary}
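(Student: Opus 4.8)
The plan is to simply specialize Theorem~\ref{th:main-nonconvex} with $\alpha_1 = 2/3$ and then count the total number of IFO calls needed to drive the right-hand side below $\epsilon$. First I would substitute $\alpha_1 = 2/3$ into the parameter settings: the step size becomes $\eta = \mu_0/(Ln^{2/3}\zeta^{\alpha_2})$ and the inner-loop length becomes $m = \lfloor n/(3\mu_0\zeta^{1-2\alpha_2})\rfloor$. Plugging into the bound of Theorem~\ref{th:main-nonconvex} gives
\[ \mathbb{E}[\|\nabla f(x_a)\|^2] \le \frac{Ln^{2/3}\zeta^{\alpha_2}[f(x^0)-f(x^*)]}{T\nu}, \]
so to obtain an $\epsilon$-accurate solution it suffices to take $T = mS = \Theta\!\left(Ln^{2/3}\zeta^{\alpha_2}[f(x^0)-f(x^*)]/(\nu\epsilon)\right)$, i.e.\ $T = O(n^{2/3}\zeta^{\alpha_2}/\epsilon)$ treating $L$, $\nu$, and the initial suboptimality as constants.

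Next I would tally IFO calls. Each outer loop $s$ costs $n$ calls for the full gradient $g^{s+1}$ plus $O(m)$ calls in the inner loop (two stochastic gradients per inner iteration, but $\nabla f_{i_t}(\tilde x^s)$ may be cached), giving $O(n + m)$ per epoch and hence $O(S(n+m))$ in total, where $S = \lceil T/m \rceil$. Thus the count is $O\big((T/m)\cdot n + T\big) = O\big(Tn/m + T\big)$. The only subtlety is that $m$ itself depends on $\zeta$ and $\alpha_2$: since $m = \Theta(n/\zeta^{1-2\alpha_2})$ when this quantity is at least $1$, we have $n/m = \Theta(\zeta^{1-2\alpha_2})$, but one must be careful when $\zeta^{1-2\alpha_2}$ exceeds $n$ (equivalently when $m$ would be forced to $1$), which is exactly what produces the two-case split. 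For $\alpha_2 \le 1/2$ the exponent $1-2\alpha_2 \ge 0$; assuming the regime where $m \ge 1$ is meaningful, $n/m = \Theta(\zeta^{1-2\alpha_2})$ and $Tn/m = O(n^{2/3}\zeta^{\alpha_2}\cdot\zeta^{1-2\alpha_2}/\epsilon) = O(n^{2/3}\zeta^{1-\alpha_2}/\epsilon)$, which dominates the additive $T = O(n^{2/3}\zeta^{\alpha_2}/\epsilon)$ term only partially, so the total is $O(n + n^{2/3}\zeta^{1-\alpha_2}/\epsilon)$, the extra $n$ accounting for at least one full-gradient evaluation. For $\alpha_2 > 1/2$, $1-2\alpha_2 < 0$, so $\zeta^{1-2\alpha_2} < 1$ and the floor pins $m$ at a value of order $n^{2/3}$ (roughly, $m$ cannot shrink below what keeps the analysis valid), equivalently $n/m = \Theta(n^{1/3}\zeta^{2\alpha_2-1})$; then the per-epoch full-gradient cost accumulates to $O(n\zeta^{2\alpha_2-1})$ and the stochastic cost remains $O(n^{2/3}\zeta^{\alpha_2}/\epsilon)$, yielding the second branch.

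The main obstacle is purely bookkeeping: tracking how the floor operation in $m=\lfloor n^{3\alpha_1/2}/(3\mu_0\zeta^{1-2\alpha_2})\rfloor$ interacts with the sign of $1-2\alpha_2$, and correspondingly whether the $n/m$ ratio is controlled by $\zeta^{1-2\alpha_2}$ or saturates — this is the source of the case distinction at $\alpha_2 = 1/2$. No new geometric or probabilistic ideas are needed beyond Theorem~\ref{th:main-nonconvex}; one only has to choose the constants in $S$ and $T$ so that $\mathbb{E}[\|\nabla f(x_a)\|^2]\le\epsilon$ and then simplify the resulting $O(\cdot)$ expression in each regime.
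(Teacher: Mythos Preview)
Your overall strategy---specialize Theorem~\ref{th:main-nonconvex} to $\alpha_1=2/3$, read off $T=O(n^{2/3}\zeta^{\alpha_2}/\epsilon)$, and count IFO calls as $S(n+m)$ with $S=\lceil T/m\rceil$---is exactly what the paper does, and your treatment of the case $\alpha_2\le 1/2$ is fine. The problem is your handling of $\alpha_2>1/2$, where the reasoning goes off the rails.

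When $\alpha_2>1/2$ you have $1-2\alpha_2<0$, hence $\zeta^{1-2\alpha_2}\le 1$ and
\[
m \;=\; \Big\lfloor \tfrac{n}{3\mu_0\zeta^{1-2\alpha_2}}\Big\rfloor \;=\; \Theta\!\left(n\,\zeta^{2\alpha_2-1}\right)\;\ge\;\Theta(n).
\]
So $m$ \emph{grows} with $\zeta$; it is not ``pinned at a value of order $n^{2/3}$,'' and your claimed $n/m=\Theta(n^{1/3}\zeta^{2\alpha_2-1})$ is incorrect (the correct value is $n/m=\Theta(\zeta^{1-2\alpha_2})\le 1$). The case split at $\alpha_2=1/2$ has nothing to do with the floor forcing $m$ to~$1$; it is simply the threshold at which $m$ overtakes $n$ in the per-epoch cost $n+2m$, and simultaneously the threshold at which $\zeta^{\alpha_2}$ overtakes $\zeta^{1-\alpha_2}$ in the $\epsilon$-dependent term. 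The paper's accounting is just
\[
\text{IFO} \;=\; O\!\bigl((1+T/m)(n+m)\bigr) \;=\; O\!\Bigl(n(1+\zeta^{2\alpha_2-1})\bigl(1+n^{-1/3}\zeta^{1-\alpha_2}/\epsilon\bigr)\Bigr),
\]
and expanding this product gives the four terms $n$, $n\zeta^{2\alpha_2-1}$, $n^{2/3}\zeta^{1-\alpha_2}/\epsilon$, $n^{2/3}\zeta^{\alpha_2}/\epsilon$; for $\alpha_2>1/2$ the second and fourth dominate, yielding $O(n\zeta^{2\alpha_2-1}+n^{2/3}\zeta^{\alpha_2}/\epsilon)$. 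In particular, the $n\zeta^{2\alpha_2-1}$ term is the \emph{inner-loop} cost $m$ of a single epoch, not an accumulated full-gradient cost. Your formula $O(Tn/m+T)$ drops the additive ``at least one epoch'' contribution $n+m$, which is precisely where that term comes from in this regime.
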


Setting $\alpha_2=1/2$ in Corollary \ref{th:complexity-nonconvex} immediately leads to Corollary  \ref{th:complexity-specific-nonconvex}:

\begin{corollary} \label{th:complexity-specific-nonconvex}
	With assumptions in Theorem \ref{th:main-nonconvex} and $\alpha_1 = 2/3, \alpha_2 = 1/2$, the IFO complexity for achieving an $\epsilon$-accurate solution is $O\left(n+(n^{2/3}\zeta^{1/2}/\epsilon)\right)$.
\end{corollary}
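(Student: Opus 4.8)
The plan is to obtain Corollary~\ref{th:complexity-specific-nonconvex} as the boundary case $\alpha_2 = 1/2$ of Corollary~\ref{th:complexity-nonconvex}; equivalently — and this is the route I would actually write out, to keep it self-contained — one simply re-runs the short IFO-counting argument starting from Theorem~\ref{th:main-nonconvex} with $\alpha_1 = 2/3$ and $\alpha_2 = 1/2$ substituted from the outset. There is no new idea needed: all the real work (tracking the curvature constant $\zeta$ through the epoch-length/step-size trade-off) already lives in the proofs of Theorem~\ref{th:main-nonconvex} and Corollary~\ref{th:complexity-nonconvex} in the appendix.

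First I would plug $\alpha_1 = 2/3,\ \alpha_2 = 1/2$ into the parameter settings of Theorem~\ref{th:main-nonconvex}. The inner-loop length becomes $m = \lfloor n^{3\alpha_1/2}/(3\mu_0\zeta^{1-2\alpha_2})\rfloor = \lfloor n/(3\mu_0)\rfloor$, since $\zeta^{1-2\alpha_2} = \zeta^{0} = 1$; hence $m = \Theta(n)$ (assuming $n$ large enough that $m \ge 1$), and the step size is $\eta = \mu_0/(Ln^{2/3}\zeta^{1/2})$. The guarantee of Theorem~\ref{th:main-nonconvex} then reads $\mathbb{E}[\|\nabla f(x_a)\|^2] \le Ln^{2/3}\zeta^{1/2}[f(x^0)-f(x^*)]/(T\nu)$, so to force the right-hand side below $\epsilon$ it suffices to take $T = \Theta\!\left(n^{2/3}\zeta^{1/2}[f(x^0)-f(x^*)]/(\nu\epsilon)\right)$, i.e.\ $T = O(n^{2/3}\zeta^{1/2}/\epsilon)$ when $L$, $\nu$, and $f(x^0)-f(x^*)$ are treated as constants.

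Next I would count IFO calls. Each epoch costs $n$ oracle calls to build $g^{s+1}$ plus $2m$ calls inside the inner loop (one for $\nabla f_{i_t}(x_t^{s+1})$, one for $\nabla f_{i_t}(\tilde x^s)$), for a per-epoch cost $n + 2m = \Theta(n)$ because $m = \Theta(n)$. With $mS = T$ epochs of inner iterations, the total is $S(n+2m) = (T/m)\cdot\Theta(n)$; if the target $T$ is below $m$ we instead run a single epoch at cost $\Theta(n)$. Combining the two regimes, and using $n/m = \Theta(1)$, gives total IFO calls $O\!\left(n + n^{2/3}\zeta^{1/2}/\epsilon\right)$, which is the claim. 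As a consistency check, substituting $\alpha_2 = 1/2$ into either branch of Corollary~\ref{th:complexity-nonconvex} — $O(n + n^{2/3}\zeta^{1-\alpha_2}/\epsilon)$ from the $\alpha_2 \le 1/2$ branch and $O(n\zeta^{2\alpha_2-1} + n^{2/3}\zeta^{\alpha_2}/\epsilon)$ from the $\alpha_2 > 1/2$ branch — yields the same expression, confirming the two pieces of Corollary~\ref{th:complexity-nonconvex} agree on the boundary.

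I do not expect any genuine obstacle: the statement is a pure specialization. The only mild point to be careful about is $m \ge 1$, which requires $n$ to exceed an absolute constant ($n \ge 3\mu_0$); this is harmless because the bound is stated in the form ``$O(n + \cdots)$,'' which is vacuous for $n = O(1)$. One should also note $T = mS$ forces $T$ to be a multiple of $m$, so at most an extra additive $O(m) = O(n)$ is incurred by rounding up — again absorbed into the $O(n)$ term.
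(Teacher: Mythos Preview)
Your proposal is correct and matches the paper's approach: the paper proves Corollary~\ref{th:complexity-specific-nonconvex} simply by stating that ``setting $\alpha_2=1/2$ in Corollary~\ref{th:complexity-nonconvex} immediately leads to'' it, with no further argument. Your write-up is just a more explicit, self-contained version of that same specialization (and your consistency check at the boundary is exactly the one-line proof the paper has in mind).
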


The same reasoning allows us to also capture the class of gradient dominated functions (\ref{eq:gradient-dominated}), for which \citet{reddi2016stochastic} proved that SVRG converges linearly to a global optimum. We have the following corresponding theorem for \rsvrg:
\begin{theorem} \label{th:gradient-dominated}
	Suppose that in addition to the assumptions in Theorem \ref{th:main-nonconvex}, $f$ is $\tau$-gradient dominated. Then there exist universal constants $\mu_0 \in (0,1),\nu > 0$ such that if we run Algorithm \ref{alg:gd-rsvrg} with $\eta = \mu_0/(Ln^{2/3}\zeta^{1/2}), m = \lfloor n/(3\mu_0)\rfloor, S = \lceil (6+\frac{18\mu_0}{n-3})L\tau\zeta^{1/2}\mu_0/(\nu n^{1/3})\rceil$, we have
	\begin{align*}
		\mathbb{E}[\|\nabla f(x^K)\|^2] &\le 2^{-K}\|\nabla f(x^0)\|^2, \\ \mathbb{E}[f(x^K) - f(x^*)] &\le 2^{-K} [f(x^0) - f(x^*)].
	\end{align*}
\end{theorem}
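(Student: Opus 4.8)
The plan is to run the nonconvex \rsvrg of Theorem \ref{th:main-nonconvex} in \emph{epochs}, where a single ``epoch'' here means one full invocation of Algorithm \ref{alg:rsvrg} (with $S$ outer loops and $m$ inner loops as specified), and to show that each such epoch contracts both $\mathbb{E}[f(x) - f(x^*)]$ and $\mathbb{E}[\|\nabla f(x)\|^2]$ by a constant factor of $1/2$. Concretely, let $x^{k}$ denote the output after $k$ epochs, with $x^0$ the initial point. First I would apply Theorem \ref{th:main-nonconvex} to the $k$-th epoch, started from $x^{k-1}$ and run with $T = mS$ total inner iterations; it gives
\[ \mathbb{E}[\|\nabla f(x^k)\|^2] \le \frac{Ln^{1/3}\zeta^{1/2}}{S\,m\,\nu}\,\mathbb{E}[f(x^{k-1}) - f(x^*)], \]
using $\alpha_1 = 2/3$, $\alpha_2 = 1/2$, and $n^{\alpha_1}/m = n^{2/3}/\lfloor n/(3\mu_0)\rfloor \le 3\mu_0/n^{1/3}$ (up to the $\frac{18\mu_0}{n-3}$ correction coming from the floor). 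Then I would invoke the $\tau$-gradient-dominated hypothesis \eqref{eq:gradient-dominated}, namely $f(x^{k-1}) - f(x^*) \le \tau\|\nabla f(x^{k-1})\|^2$, but more usefully in the chained form $\mathbb{E}[f(x^k) - f(x^*)] \le \tau\,\mathbb{E}[\|\nabla f(x^k)\|^2] \le \frac{L n^{1/3}\zeta^{1/2}\tau}{Sm\nu}\mathbb{E}[f(x^{k-1}) - f(x^*)]$.

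The second step is to choose $S$ so that the contraction factor $\frac{Ln^{1/3}\zeta^{1/2}\tau}{Sm\nu}$ is at most $1/2$. Since $m = \lfloor n/(3\mu_0)\rfloor \ge (n-3)/(3\mu_0) \cdot \frac{n}{n}$-type bound, we need $S \gtrsim \frac{2 L n^{1/3}\zeta^{1/2}\tau}{m\nu}$; substituting $m \approx n/(3\mu_0)$ gives $S = \lceil (6 + \tfrac{18\mu_0}{n-3}) L\tau\zeta^{1/2}\mu_0/(\nu n^{1/3}) \rceil$, which is exactly the value in the statement — the extra $\tfrac{18\mu_0}{n-3}$ term absorbs the discrepancy between $m$ and $n/(3\mu_0)$ caused by the floor. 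With this choice, $\mathbb{E}[f(x^k) - f(x^*)] \le \tfrac12 \mathbb{E}[f(x^{k-1}) - f(x^*)]$, and iterating over $k = 1,\dots,K$ yields $\mathbb{E}[f(x^K) - f(x^*)] \le 2^{-K}[f(x^0) - f(x^*)]$. For the gradient bound I would combine the one-epoch inequality with the already-established decay of the function gap: $\mathbb{E}[\|\nabla f(x^K)\|^2] \le \frac{Ln^{1/3}\zeta^{1/2}}{Sm\nu}\mathbb{E}[f(x^{K-1}) - f(x^*)] \le \frac12 \cdot 2^{-(K-1)}\|\nabla f(x^0)\|^2 = 2^{-K}\|\nabla f(x^0)\|^2$, where the last step uses gradient-domination once more at $x^0$ (or simply $\mathbb{E}[\|\nabla f(x^K)\|^2] \le \tfrac1\tau \mathbb{E}[f(x^K)-f(x^*)]$ together with $f(x^0)-f(x^*)\le\tau\|\nabla f(x^0)\|^2$).

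The only genuinely delicate point is bookkeeping around the floor in $m$: one must verify that $n^{\alpha_1}/(m) \le 3\mu_0(1 + \tfrac{3}{n-3})/n^{1/3}$ and propagate the resulting $(1+\tfrac{3}{n-3})$ factor cleanly into $S$, which is where the otherwise mysterious $\tfrac{18\mu_0}{n-3}$ comes from. Everything else is a direct composition of Theorem \ref{th:main-nonconvex} with \eqref{eq:gradient-dominated}; no new geometric estimates are needed, since the curvature dependence is entirely inherited through the constant $\zeta$ already appearing in Theorem \ref{th:main-nonconvex}. I would also remark that $K$ here counts outer ``restart'' rounds, so the total IFO complexity is $O\!\big((n + L\tau\zeta^{1/2}n^{2/3})\log(1/\epsilon)\big)$, which should be stated as the companion corollary.
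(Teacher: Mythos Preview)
Your approach is essentially identical to the paper's: apply Theorem \ref{th:main-nonconvex} with $\alpha_1=2/3,\ \alpha_2=1/2$ to each call of \rsvrg, combine with gradient domination to get a per-round contraction by $1/2$ in both $\mathbb{E}[f-f^*]$ and $\mathbb{E}\|\nabla f\|^2$, and iterate. One recurring typo to fix: the factor from Theorem \ref{th:main-nonconvex} is $Ln^{\alpha_1}\zeta^{\alpha_2}/(T\nu)=Ln^{2/3}\zeta^{1/2}/(mS\nu)$, not $Ln^{1/3}$; with the correct exponent your arithmetic for $S$ goes through exactly (and you actually supply more detail on the floor correction than the paper, which simply asserts $T=mS\ge 2L\tau n^{2/3}\zeta^{1/2}/\nu$ and chains the inequalities $\tfrac{1}{\tau}\mathbb{E}[f(x^{k+1})-f(x^*)]\le \mathbb{E}\|\nabla f(x^{k+1})\|^2\le \tfrac{1}{2\tau}\mathbb{E}[f(x^{k})-f(x^*)]\le \tfrac{1}{2}\mathbb{E}\|\nabla f(x^{k})\|^2$).
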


\begin{algorithm}[!t] \label{alg:gd-rsvrg}\small
	\caption{\small GD-SVRG($x^0, m, \eta, S, K$)}
	\SetAlgoLined
	\SetKwInput{KwData}{Parameters}
	\KwData{update frequency $m$, learning rate $\eta$, number of epochs $S$, $K$, $x^0$}
	\For{$k=0,\dots,K-1$}{
		$x^{k+1} = $ \textsc{Rsvrg}($x^k, m, \eta, S$) with Option II\;
	}
	\KwOut{$x^K$}
\end{algorithm}
\setlength{\textfloatsep}{0pt}
We summarize the implication of Theorem \ref{th:gradient-dominated} as follows (note the dependency on curvature):
\begin{corollary} \label{th:gradient-dominated-corollary}
	With Algorithm \ref{alg:gd-rsvrg} and the parameters in Theorem \ref{th:gradient-dominated}, the IFO complexity to compute an $\epsilon$-accurate solution for a gradient dominated function $f$ is $O((n+L\tau\zeta^{1/2}n^{2/3})\log(1/\epsilon))$.
\end{corollary}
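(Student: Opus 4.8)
## Proof Proposal for Corollary \ref{th:gradient-dominated-corollary}

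The plan is to obtain the bound by composing the per-invocation cost of \rsvrg with the linear outer-loop convergence rate already established in Theorem~\ref{th:gradient-dominated}. First I would count the IFO calls of a single call \rsvrg($x^k,m,\eta,S$): each of the $S$ epochs starts by forming the snapshot full gradient $g^{s+1}=\frac1n\sum_i\nabla f_i(\tilde x^s)$ at a cost of $n$ IFO calls, and then runs $m$ inner iterations, each of which evaluates $\nabla f_{i_t}$ at the two points $x_t^{s+1}$ and $\tilde x^s$ (to form $v_t^{s+1}$) at a cost of $2$ IFO calls. Hence one \rsvrg call costs $S(n+2m)$ IFO calls.

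Next I would plug in the parameter choices from Theorem~\ref{th:gradient-dominated}. We have $m=\lfloor n/(3\mu_0)\rfloor=\Theta(n)$ since $\mu_0\in(0,1)$, and $S=\lceil(6+\tfrac{18\mu_0}{n-3})L\tau\zeta^{1/2}\mu_0/(\nu n^{1/3})\rceil$. The prefactor $6+\tfrac{18\mu_0}{n-3}$ is $O(1)$ (for $n$ bounded away from $3$), so $S=O\!\left(1+L\tau\zeta^{1/2}/n^{1/3}\right)$, where the additive $1$ records the effect of the ceiling forcing $S\ge1$. Multiplying, a single \rsvrg call therefore costs
\[ S(n+2m)=O\!\left(\Bigl(1+\tfrac{L\tau\zeta^{1/2}}{n^{1/3}}\Bigr)\cdot n\right)=O\!\left(n+L\tau\zeta^{1/2}n^{2/3}\right). \]

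Finally I would invoke Theorem~\ref{th:gradient-dominated}, which gives $\mathbb{E}[f(x^K)-f(x^*)]\le 2^{-K}[f(x^0)-f(x^*)]$ (and likewise for the squared gradient norm), so that $K=\lceil\log_2((f(x^0)-f(x^*))/\epsilon)\rceil=O(\log(1/\epsilon))$ outer iterations of Algorithm~\ref{alg:gd-rsvrg} suffice to reach an $\epsilon$-accurate solution. Since each outer iteration is exactly one \rsvrg call, multiplying the per-call cost above by $K$ yields total IFO complexity $O\!\left((n+L\tau\zeta^{1/2}n^{2/3})\log(1/\epsilon)\right)$, as claimed.

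I do not expect a real obstacle: the argument is essentially bookkeeping that composes two results already in hand. The only place needing a little care is tracking how the ceiling in the definition of $S$ generates the standalone $n$ term (the regime $L\tau\zeta^{1/2}\lesssim n^{1/3}$, where a single SVRG epoch already secures the gradient-dominated decrease), and confirming $m=\Theta(n)$ so the inner-loop cost matches the order of the snapshot full-gradient cost.
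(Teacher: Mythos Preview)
Your proposal is correct and follows essentially the same approach as the paper: the paper's proof is a one-line version of your argument, computing the per-\rsvrg cost as $O((n+m)S)=O(n+L\tau\zeta^{1/2}n^{2/3})$ and then multiplying by $K=O(\log(1/\epsilon))$ outer iterations from Theorem~\ref{th:gradient-dominated}. Your more careful accounting of the ceiling in $S$ (producing the standalone $n$ term) is a detail the paper leaves implicit.
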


A typical example of gradient dominated function is a strongly g-convex function (see appendix). Specifically, we have the following corollary, which prove linear convergence rate of \rsvrg with the same assumptions as in Theorem \ref{th:theorem_stronglyconvex}, improving the dependence on the condition number.
\begin{corollary} \label{th:gradient-dominated-strongly-convex}
	With Algorithm \ref{alg:gd-rsvrg} and the parameters in Theorem \ref{th:gradient-dominated}, the IFO complexity to compute an $\epsilon$-accurate solution for a $\mu$-strongly g-convex function $f$ is $O((n+\mu^{-1}L\zeta^{1/2}n^{2/3})\log(1/\epsilon))$.
\end{corollary}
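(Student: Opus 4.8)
The plan is to derive this corollary directly from Corollary~\ref{th:gradient-dominated-corollary} by showing that every $\mu$-strongly g-convex function is $\tau$-gradient dominated with $\tau = \tfrac{1}{2\mu}$, and then substituting this value of $\tau$ into the IFO bound $O((n + L\tau\zeta^{1/2}n^{2/3})\log(1/\epsilon))$.

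First I would establish the implication ``$\mu$-strongly g-convex $\Rightarrow$ $\tfrac{1}{2\mu}$-gradient dominated.'' Fix $x \in \mathcal{X}$ and let $x^*$ be a global minimizer of $f$, which lies in $\mathcal{X}$ by the standing assumption~(a) of Section~\ref{sec:main} and is reachable via $\Exp_x^{-1}$ since the exponential map is invertible on $\mathcal{X}$. Applying the defining inequality of $\mu$-strong g-convexity with $y = x^*$ gives
\[ f(x^*) \ge f(x) + \langle \nabla f(x), \Exp_x^{-1}(x^*)\rangle + \tfrac{\mu}{2}\|\Exp_x^{-1}(x^*)\|^2. \]
The right-hand side is a convex quadratic in the tangent vector $\Exp_x^{-1}(x^*) \in T_x\mathcal{M}$; lower-bounding it by its unconstrained minimum over $T_x\mathcal{M}$ (attained at the vector $-\nabla f(x)/\mu$) yields $f(x^*) \ge f(x) - \tfrac{1}{2\mu}\|\nabla f(x)\|^2$, i.e.\ $f(x) - f(x^*) \le \tfrac{1}{2\mu}\|\nabla f(x)\|^2$, which is exactly~\eqref{eq:gradient-dominated} with $\tau = \tfrac{1}{2\mu}$.

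Then I would invoke Corollary~\ref{th:gradient-dominated-corollary}: since $f$ also satisfies the remaining hypotheses of Theorem~\ref{th:main-nonconvex} (each $f_i$ is $L$-g-smooth and the sectional curvature in $\mathcal{X}$ is lower bounded by $\kappa_{\min}$), running Algorithm~\ref{alg:gd-rsvrg} with the parameters of Theorem~\ref{th:gradient-dominated} produces an $\epsilon$-accurate solution in $O((n + L\tau\zeta^{1/2}n^{2/3})\log(1/\epsilon))$ IFO calls. Substituting $\tau = \tfrac{1}{2\mu}$ and absorbing the numerical constant into $O(\cdot)$ gives $O((n + \mu^{-1}L\zeta^{1/2}n^{2/3})\log(1/\epsilon))$, as claimed.

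I do not anticipate a serious obstacle: the whole content is the strong-g-convexity $\Rightarrow$ gradient-dominated step, and the only points requiring care are (i) checking that the quadratic lower bound is stated in terms of the \emph{intrinsic} inner product and norm on $T_x\mathcal{M}$, so that the Euclidean-style ``complete the square'' argument applies verbatim inside the tangent space, and (ii) verifying that $x^*$ is joined to $x$ by a minimizing geodesic so that $\Exp_x^{-1}(x^*)$ is well defined — both of which follow from the standing assumptions in Section~\ref{sec:main}.
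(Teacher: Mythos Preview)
Your proposal is correct and follows essentially the same approach as the paper: show that $\mu$-strong g-convexity implies $\tfrac{1}{2\mu}$-gradient dominance via completing the square in the tangent space, then plug $\tau = \tfrac{1}{2\mu}$ into Corollary~\ref{th:gradient-dominated-corollary}. The only cosmetic difference is that the paper minimizes the strong-convexity lower bound over all $y\in\mathcal{X}$ and then picks $y=\Exp_x(-\tfrac{1}{\mu}\nabla f(x))$, whereas you set $y=x^*$ first and minimize the resulting quadratic over $T_x\mathcal{M}$; your variant sidesteps the need to check that $\Exp_x(-\tfrac{1}{\mu}\nabla f(x))\in\mathcal{X}$, so it is arguably slightly cleaner.
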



\section{Applications} \label{sec:experiments}
  \subsection{Computing the leading eigenvector} \label{sec:experiments-eigenvector}
In this section, we apply our analysis of \rsvrg for gradient dominated functions (Theorem \ref{th:gradient-dominated}) to fast eigenvector computation, a fundamental problem that is still being actively researched in the big-data setting \cite{shamir2015stochastic,garber2015fast,jin2015robust}. For the problem of computing the leading eigenvector, i.e.,
\begin{equation} \label{eq:first-eigenvector}
  \min_{x^{\top}x=1} \quad -x^{\top}\left(\nlsum_{i=1}^n z_iz_i^{\top}\right)x \quad \triangleq \quad -x^{\top} A x = f(x),
\end{equation}
existing analyses for state-of-the-art algorithms typically result in $O(1/\delta^2)$ dependency on the eigengap $\delta$ of $A$, as opposed to the conjectured $O(1/\delta)$ dependency \cite{shamir2015stochastic}, as well as the $O(1/\delta)$ dependency of power iteration.
Here we give new support for the $O(1/\delta)$ conjecture. Note that  Problem (\ref{eq:first-eigenvector}) seen as one in $\mathbb{R}^d$ is nonconvex, with negative semidefinite Hessian everywhere, and has nonlinear constraints. However, we show that on the hypersphere $\mathbb{S}^{d-1}$ Problem (\ref{eq:first-eigenvector}) is unconstrained, and has \emph{gradient dominated} objective. In particular we have the following result:

\begin{theorem} \label{th:pca-gd}
Suppose $A$ has eigenvalues $\lambda_1 > \lambda_2 \ge \dots \ge \lambda_d$ and $\delta = \lambda_1 - \lambda_2$. With probability $1-p$, the random initialization $x^0$ falls in a Riemannian ball of a global optimum of the objective function, within which the objective function is $O(\tfrac{d}{p^2\delta})$-gradient dominated. 
\end{theorem}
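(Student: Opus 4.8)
The plan is to work in coordinates that diagonalize $A$ and reduce the statement to an elementary variance inequality together with a concentration estimate for the uniform distribution on the sphere. Rotating so that $A = \mathrm{diag}(\lambda_1,\dots,\lambda_d)$, the assumption $\lambda_1 > \lambda_2$ makes $\pm e_1$ the only global minimizers of $f$ on $\mathbb{S}^{d-1}$, with $f(\pm e_1) = -\lambda_1$. For $x\in\mathbb{S}^{d-1}$ the Riemannian gradient is the tangential projection $\nabla f(x) = -2(I - xx^\top)Ax$, so $\|\nabla f(x)\|^2 = 4\bigl(\|Ax\|^2 - (x^\top Ax)^2\bigr)$. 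Introducing the probability vector $p_i := x_i^2$ and viewing $\lambda$ as a random variable taking value $\lambda_i$ with probability $p_i$, this reads $\|\nabla f(x)\|^2 = 4\,\mathrm{Var}_p(\lambda)$, while $f(x) - f(x^*) = \lambda_1 - \mathbb{E}_p[\lambda] = \sum_{i\ge 2}(\lambda_1 - \lambda_i)p_i \ge 0$.

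Next I would establish the pointwise gradient-domination bound. Using the identity $\mathrm{Var}_p(\lambda) = \sum_{i<j} p_i p_j(\lambda_i - \lambda_j)^2$, keeping only the pairs $(1,j)$ and using $\lambda_1 - \lambda_j \ge \lambda_1 - \lambda_2 = \delta$ for all $j\ge 2$ gives
\[ \mathrm{Var}_p(\lambda) \ \ge\ p_1\sum_{j\ge 2} p_j(\lambda_1 - \lambda_j)^2 \ \ge\ p_1\,\delta\sum_{j\ge 2} p_j(\lambda_1 - \lambda_j) \ =\ x_1^2\,\delta\,\bigl(f(x) - f(x^*)\bigr). \]
Hence $f(x) - f(x^*) \le \tfrac{1}{4\delta x_1^2}\|\nabla f(x)\|^2$; that is, on the spherical cap $\mathcal{X}_\rho := \{x\in\mathbb{S}^{d-1} : x_1^2 \ge \rho\}$ — which restricted to $x_1 \ge \sqrt{\rho}$ is a geodesic ball around $e_1$ of radius $\arccos\sqrt\rho < \pi/2$, hence geodesically convex and containing a global optimum — the function $f$ is $\tfrac{1}{4\rho\delta}$-gradient dominated.

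Finally I would carry out the probabilistic step: for $x^0$ uniform on $\mathbb{S}^{d-1}$, the coordinate square $(x^0_1)^2$ has a $\mathrm{Beta}(\tfrac12,\tfrac{d-1}{2})$ distribution, so bounding its density near $0$ by $s^{-1/2}/B(\tfrac12,\tfrac{d-1}{2})$ and integrating gives $\mathbb{P}\bigl[(x^0_1)^2 < \rho\bigr] \le 2\sqrt{\rho}/B(\tfrac12,\tfrac{d-1}{2}) \le \sqrt{2\rho d/\pi}$, the last step using $B(\tfrac12,\tfrac{d-1}{2}) = \sqrt{\pi}\,\Gamma(\tfrac{d-1}{2})/\Gamma(\tfrac d2) \ge \sqrt{2\pi/d}$ (a standard Gamma-ratio estimate). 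Choosing $\rho = \pi p^2/(2d)$ makes this probability at most $p$, so with probability $1-p$ the point $x^0$ lies in $\mathcal{X}_\rho$ — in fact in the cap around $\mathrm{sign}(x^0_1)\,e_1$ — where $f$ is $\tfrac{1}{4\rho\delta} = \tfrac{d}{2\pi p^2\delta} = O\!\bigl(d/(p^2\delta)\bigr)$-gradient dominated.

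The only mildly delicate points are bookkeeping rather than conceptual: (i) verifying that $\mathcal{X}_\rho$ (restricted to one sign of $x_1$) is genuinely a geodesically convex ball, so that Theorem~\ref{th:gradient-dominated} applies with $\mathcal{X} = \mathcal{X}_\rho$ and the iterates stay inside — this follows from the convexity radius of the sphere being $\pi/2$; and (ii) pinning down the constant in the Beta tail bound, which needs only a lower bound on $\Gamma((d-1)/2)/\Gamma(d/2)$. I do not anticipate a real obstacle: once $\|\nabla f(x)\|^2$ is rewritten as $4\,\mathrm{Var}_p(\lambda)$ and $f(x)-f(x^*)$ as $\lambda_1 - \mathbb{E}_p[\lambda]$, the variance-identity step does all the work.
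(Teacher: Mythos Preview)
Your proof is correct and in fact cleaner than the paper's. Both proofs work in the eigenbasis and identify the Riemannian gradient as $-2(I-xx^\top)Ax$, but the paper then splits into two cases according to whether $f(x)-f(x^*)\ge\delta/2$ or $<\delta/2$: in the first case it keeps only the $i=1$ term of $\sum_i \alpha_i^2(\lambda_i-\sum_j\alpha_j^2\lambda_j)^2$ and uses the hypothesis to extract a factor of $\delta$; in the second it keeps only the $i\ge 2$ terms and uses $-\lambda_i-f(x)>\tfrac12(\lambda_1-\lambda_i)$. Your variance identity $\mathrm{Var}_p(\lambda)=\sum_{i<j}p_ip_j(\lambda_i-\lambda_j)^2$, followed by dropping all pairs not involving index $1$ and the one-line bound $(\lambda_1-\lambda_j)^2\ge\delta(\lambda_1-\lambda_j)$, gives the same inequality $\|\nabla f(x)\|^2\ge 4x_1^2\,\delta\,(f(x)-f(x^*))$ in one stroke, with no case analysis and a slightly better constant. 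Your probabilistic step is also more explicit than the paper's, which simply asserts $\alpha_1^2=\Omega(p^2/d)$ with probability $1-p$.

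One remark: in your point~(i) you say geodesic convexity of the cap ensures the iterates stay inside. That does not follow from the convexity radius alone, and in fact the paper explicitly leaves this containment question open (see the paragraph after Theorem~\ref{th:pca-gd}). Fortunately it is not part of the theorem you are proving --- the statement only claims that $x^0$ lands in a ball on which $f$ is gradient dominated --- so this over-reach does not affect your argument.
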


We provide the proof of Theorem \ref{th:pca-gd} in appendix. Theorem \ref{th:pca-gd} gives new insights for why the conjecture might be true -- once it is shown that with a constant stepsize and with high probability (both independent of $\delta$) the iterates remain in such a Riemannian ball, applying Corollary \ref{th:gradient-dominated-corollary} one can immediately prove the $O(1/\delta)$ dependency conjecture. We leave this analysis as future work.

Next we show that variance reduced PCA (VR-PCA) \cite{shamir2015stochastic} is closely related to \rsvrg. We implement Riemannian SVRG for PCA, and use the code for VR-PCA in \cite{shamir2015stochastic}. Analytic forms for exponential map and parallel transport on hypersphere can be found in \cite[Example 5.4.1; Example 8.1.1]{absil2009optimization}. We conduct well-controlled experiments comparing the performance of two algorithms. Specifically, to investigate the dependency of convergence on $\delta$, for each $\delta = 10^{-3} / k$ where $k=1,\dots,25$, we generate a $d\times n$ matrix $Z = (z_1,\dots,z_n)$ where $d=10^3,n=10^4$ using the method $Z=UDV^{\top}$ where $U,V$ are orthonormal matrices and $D$ is a diagonal matrix, as described in \cite{shamir2015stochastic}. Note that $A$ has the same eigenvalues as $D^2$. All the data matrices share the same $U,V$ and only differ in $\delta$ (thus also in $D$). We also fix the same random initialization $x^0$ and random seed. We run both algorithms on each matrix for $50$ epochs. For every five epochs, we estimate the number of epochs required to double its accuracy \footnote{Accuracy is measured by $\frac{f(x) - f(x^*)}{|f(x^*)|}$, i.e. the relative error between the objective value and the optimum. We measure how much the error has been reduced after each five epochs, which is a multiplicative factor $c<1$ on the error at the start of each five epochs. Then we use $\log(2)/\log(1/c)*5$ as the estimate, assuming $c$ stays constant.}. This number can serve as an indicator of the global complexity of the algorithm. We plot this number for different epochs against $1/\delta$, shown in Figure \ref{fig:delta-dependency}. Note that the performance of RSVRG and VR-PCA with the same stepsize is very similar, which implies a close connection of the two. Indeed, the update $\tfrac{x+v}{\|x+v\|}$ used in \cite{shamir2015stochastic} and others is a well-known approximation to the exponential map $\Exp_x(v)$ with small stepsize (a.k.a. retraction). Also note the complexity of both algorithms seems to have an asymptotically linear dependency on $1/\delta$.
\begin{figure}[hbt] 
	\centering
	\includegraphics[width=0.23\linewidth]{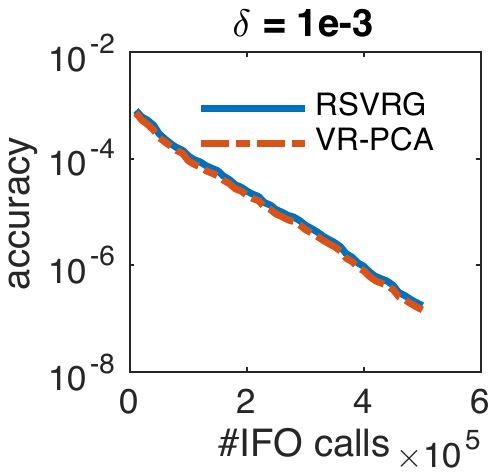} \hspace{20pt} \includegraphics[width=0.68\linewidth]{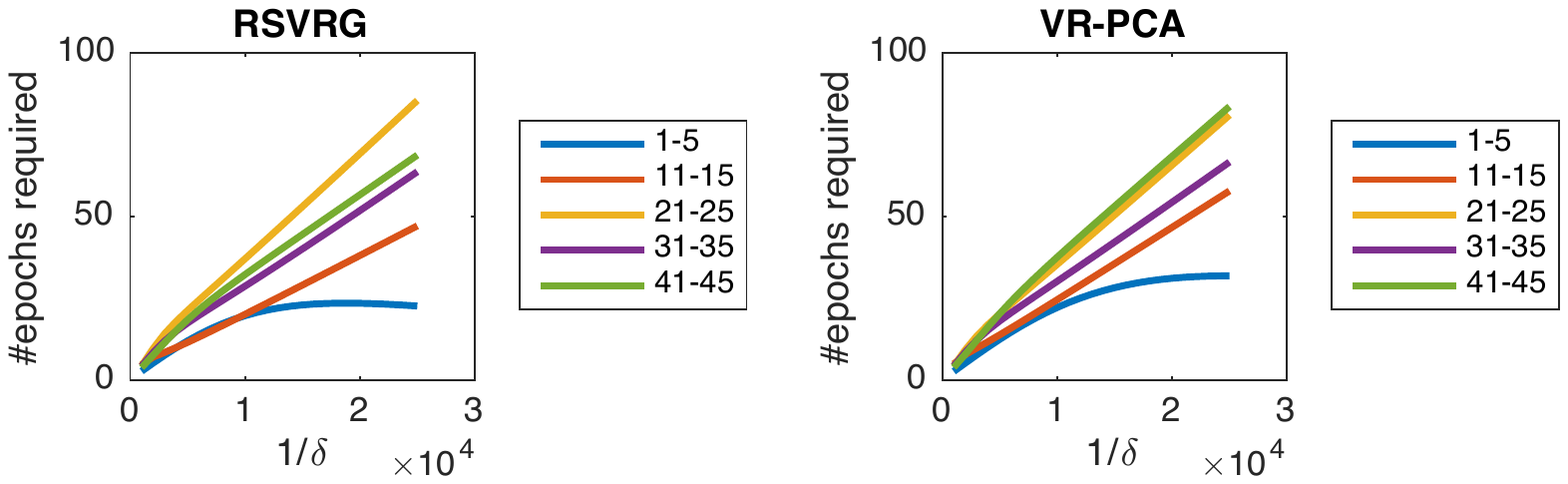}
	\caption{\small{\bf Computing the leading eigenvector.} Left: RSVRG and VR-PCA are indistinguishable in terms of IFO complexity. Middle and right: Complexity appears to depend on $1/\delta$. $x$-axis shows the inverse of eigengap $\delta$, $y$-axis shows the estimated number of epochs required to double the accuracy. Lines represent different epoch index. All variables are controlled except for $\delta$.} \label{fig:delta-dependency}
\end{figure}

\subsection{Computing the Riemannian centroid}
\vspace*{-4pt}
In this subsection we validate that \textsc{Rsvrg} converges linearly for averaging PSD matrices under the Riemannian metric. The problem for finding the Riemannian centroid of a set of PSD matrices $\{A_i\}_{i=1}^n$ is
	$X^* = \arg\min_{X\succeq 0} \left\{f(X;\{A_i\}_{i=1}^n) \triangleq \sum\nolimits_{i=1}^n \|\log(X^{-1/2}A_iX^{-1/2})\|_F^2\right\} $
where $X$ is also a PSD matrix.
This is a geodesically strongly convex problem, yet nonconvex in Euclidean space. It has been studied both in matrix computation and in various applications~\cite{bhatia07,jeuVaVa}. We use the same experiment setting as described in \cite{zhang2016first} \footnote{We generate $100\times 100$ random PSD matrices using the Matrix Mean Toolbox \cite{bini2013computing} with normalization so that the norm of each matrix equals $1$.}, and compare \textsc{Rsvrg} against Riemannian full gradient (RGD) and stochastic gradient (RSGD) algorithms (Figure \ref{fig:karcher-result}). Other methods for this problem include the relaxed Richardson iteration algorithm \cite{bini2013computing}, the approximated joint diagonalization algorithm \cite{congedo2015approximate}, and Riemannian Newton and quasi-Newton type methods, notably the limited-memory Riemannian BFGS \cite{yuan2016riemannian}. However, none of these methods were shown to greatly outperform RGD, especially in data science applications where $n$ is large and extremely small optimization error is not required.

Note that the objective is sum of squared Riemannian distances in a nonpositively curved space, thus is $(2n)$-strongly g-convex and $(2n\zeta)$-g-smooth. According to Theorem \ref{th:convex-corollary} the optimal stepsize for \textsc{Rsvrg} is $O(1/(\zeta^3 n))$. For all the experiments, we initialize all the algorithms using the arithmetic mean of the matrices. We set $\eta = \frac{1}{100n}$, and choose $m=n$ in Algorithm \ref{alg:rsvrg} for \textsc{Rsvrg}, and use suggested parameters from \cite{zhang2016first} for other algorithms. The results suggest \rsvrg has clear advantage in the large scale setting.
\begin{figure}[hbt] 
	\centering
	\includegraphics[width=\linewidth]{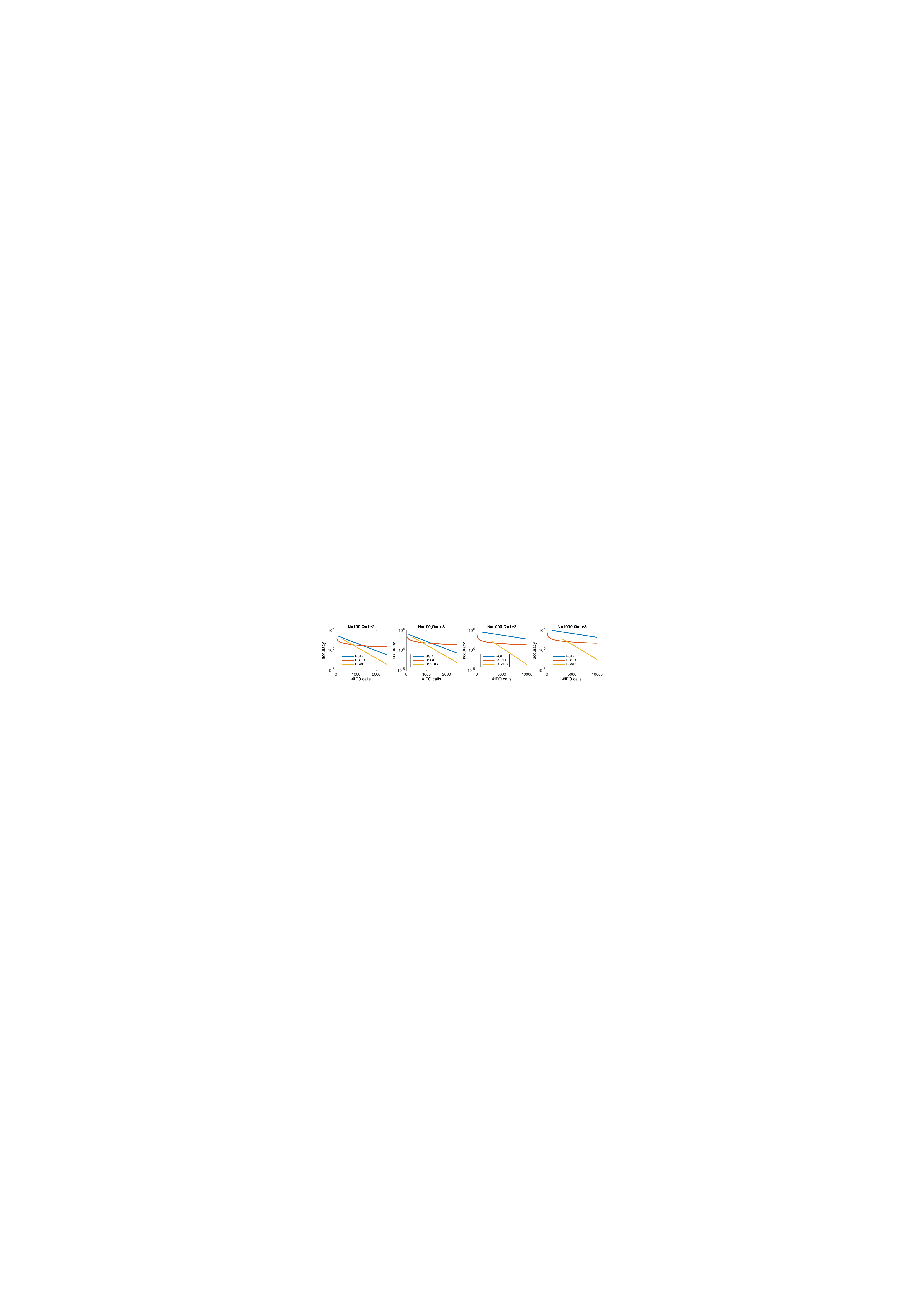} 	\caption{\small {\bf Riemannian mean of PSD matrices.} $N$: number of matrices, $Q$: conditional number of each matrix. $x$-axis shows the actual number of IFO calls, $y$-axis show $f(X) - f(X^*)$ in log scale. Lines show the performance of different algorithms in colors. Note that \textsc{Rsvrg} achieves linear convergence and is especially advantageous for large dataset.} \label{fig:karcher-result}
\end{figure}

%


\vspace{-15pt}
\section{Discussion} \label{sec:discussion}
\vspace*{-5pt}
  We introduce Riemannian SVRG, the first variance reduced stochastic gradient algorithm for Riemannian optimization. In addition, we analyze its global complexity for optimizing geodesically strongly convex, convex, and nonconvex functions, explicitly showing their dependence on sectional curvature. Our experiments validate our analysis that Riemannian SVRG is much faster than full gradient and stochastic gradient methods for solving finite-sum optimization problems on Riemannian manifolds.

Our analysis of computing the leading eigenvector as a Riemannian optimization problem is also worth noting: a nonconvex problem with nonpositive Hessian and nonlinear constraints in the ambient space turns out to be gradient dominated on the manifold. We believe this shows the promise of theoretical study of Riemannian optimization, and geometric optimization in general, and we hope it encourages other researchers in the community to join this endeavor.

Our work also has limitations -- most practical Riemannian optimization algorithms use retraction and vector transport to efficiently approximate the exponential map and parallel transport, which we do not analyze in this work. A systematic study of retraction and vector transport is an important topic for future research. For other applications of Riemannian optimization such as low-rank matrix completion \cite{vandereycken2013low}, covariance matrix estimation \cite{wiesel2012geodesic} and subspace tracking \cite{edelman1998geometry}, we believe it would also be promising to apply fast incremental gradient algorithms in the large scale setting.


\vspace*{-5pt}
{\small
  \setlength{\bibsep}{1pt}
  \bibliography{rsvrg}
}
\clearpage
\setcounter{page}{1}
\appendix
\begin{center}
  \bfseries\large
  Appendix: Fast Stochastic Optimization on Riemannian Manifolds
\end{center}

\section{Proofs for Section \ref{sec:main-convex}}
\begin{convex-main}
Assume in (\ref{eq:finite-sum}) each $f_i$ is $L$-g-smooth, and $f$ is $\mu$-strongly g-convex, then if we run Algorithm \ref{alg:rsvrg} with Option I and parameters that satisfy
\[ \alpha = \frac{3\zeta\eta L^2}{\mu - 2\zeta\eta L^2} + \frac{(1+4\zeta\eta^2-2\eta\mu)^m(\mu - 5\zeta\eta L^2)}{\mu - 2\zeta\eta L^2} < 1 \]
then with $S$ outer loops, the Riemannian SVRG algorithm produces an iterate $x_a$ that satisfies
\[ \mathbb{E}d^2(x_a,x^*) \le \alpha^S d^2(x^0,x^*). \]
\end{convex-main}

\begin{proof}
We start by bounding the squared norm of the variance reduced gradient.
Since $v_t^{s+1}=\nabla f_{i_t}(x_{t}^{s+1}) - \Gamma_{\tilde{x}^s}^{x_{t}^{s+1}}\left(\nabla f_{i_t}(\tilde{x}^s) - g^{s+1}\right)$, conditioned on $x_{t}^{s+1}$ and taking expectation with respect to $i_t$, we obtain:
\begin{align*}
\mathbb{E}\|v_t^{s+1}\|^2 = & ~ \mathbb{E}\left\|\nabla f_{i_t}(x_{t}^{s+1}) - \Gamma_{\tilde{x}^s}^{x_{t}^{s+1}}\left(\nabla f_{i_t}(\tilde{x}^s) - g^{s+1}\right)\right\|^2 \\
	= & ~ \mathbb{E}\left\|\left(\nabla f_{i_t}(x_{t}^{s+1}) - \Gamma_{\tilde{x}^s}^{x_{t}^{s+1}}\nabla f_{i_t}(\tilde{x}^s)\right) + \Gamma_{\tilde{x}^s}^{x_{t}^{s+1}}\left(\nabla f(\tilde{x}^s) - \Gamma_{x^*}^{\tilde{x}^s}\nabla f(x^*)\right)\right\|^2 \\
	\le &~ 2\mathbb{E}\left\|\nabla f_{i_t}(x_{t}^{s+1}) - \Gamma_{\tilde{x}^s}^{x_{t}^{s+1}}\nabla f_{i_t}(\tilde{x}^s)\right\|^2 + 2\mathbb{E}\left\| \Gamma_{\tilde{x}^s}^{x_{t}^{s+1}}\left(\nabla f(\tilde{x}^s) - \Gamma_{x^*}^{\tilde{x}^s}\nabla f(x^*)\right)\right\|^2 \\
	= &~ 2\mathbb{E}\left\|\nabla f_{i_t}(x_{t}^{s+1}) - \Gamma_{\tilde{x}^s}^{x_{t}^{s+1}}\nabla f_{i_t}(\tilde{x}^s)\right\|^2 + 2\mathbb{E}\left\| \nabla f(\tilde{x}^s) - \Gamma_{x^*}^{\tilde{x}^s}\nabla f(x^*)\right\|^2 \\
	\le &~ 2L^2\left\|\Exp_{x_t^{s+1}}^{-1}(\tilde{x}^s)\right\|^2 + 2L^2\left\|\Exp_{\tilde{x}^s}^{-1}(x^*)\right\|^2 \\
	\le &~ 2L^2\left(\left\|\Exp_{x_t^{s+1}}^{-1}(x^*)\right\|+\left\|\Exp_{\tilde{x}^s}^{-1}(x^*)\right\|\right)^2 + 2L^2\left\|\Exp_{\tilde{x}^s}^{-1}(x^*)\right\|^2 \\
	\le &~ 4L^2\left\|\Exp_{x_t^{s+1}}^{-1}(x^*)\right\|^2 + 6L^2\left\|\Exp_{\tilde{x}^s}^{-1}(x^*)\right\|^2
\end{align*}
We use $\|a+b\|^2 \le 2\|a\|^2 + 2\|b\|^2$ twice, in the first and fourth inequalities. The second equality is due to $\nabla f(x^*) = 0$. The second inequality is due to the $L$-g-smoothness assumption. The third inequality is due to triangle inequality.
	
Notice that $\mathbb{E} v_t^{s+1} = \nabla f(x_{t}^{s+1})$ and $x_{t+1}^{s+1} = \Exp_{x_{t}^{s+1}}(-\eta v_t^{s+1})$, we thus have
\begin{align*}
\mathbb{E}d^2(x_{t+1}^{s+1},x^*) &\le d^2(x_{t}^{s+1},x^*) + 2\eta\langle \Exp_{x_{t}^{s+1}}^{-1}(x^*), \mathbb{E}v_t\rangle + \zeta \eta^2\mathbb{E}\|v_t\|^2 \\
&\le d^2(x_{t}^{s+1},x^*) + 2\eta\langle \Exp_{x_{t}^{s+1}}^{-1}(x^*), \nabla f(x_{t}^{s+1})\rangle \\
&\qquad \qquad + \zeta \eta^2 L^2 \left(4d^2(x_t^{s+1},x^*) + 6d^2(\tilde{x}^s,x^*)\right) \\
&\le \left(1+4\zeta \eta^2 L^2 - \eta\mu\right) d^2(x_{t}^{s+1},x^*) + 6\zeta \eta^2 L^2 d^2(\tilde{x}^s,x^*) \\
&\qquad \qquad + 2\eta\left(f(x^*) - f(x_t^{s+1})\right)\\
&\le \left(1+4\zeta \eta^2 L^2 - 2\eta\mu\right) d^2(x_{t}^{s+1},x^*) + 6\zeta \eta^2 L^2 d^2(\tilde{x}^s,x^*)
\end{align*}
The first inequality uses the trigonometric distance lemma, the second one uses previously obtained bound for $\mathbb{E}\|v_t\|^2$, the third and fourth use the $\mu$-strong g-convexity of $f(x)$.

We now denote $u_t \triangleq \mathbb{E} d^2(x_t^{s+1},x^*), q \triangleq 1+4\zeta \eta^2 L^2 - 2\eta\mu, p \triangleq 6\zeta \eta^2 L^2/(1-q)$. Hence by taking expectation with all the history, and noting $\tilde{x}^s = x_0^{s+1}$, we have $u_{t+1} \le q u_t + p(1-q) u_0$, i.e. $u_{t+1} - pu_0 \le q(u_t-pu_0)$. Therefore, $u_m - pu_0 \le q^m(u_0 - pu_0)$, hence we get
	\[ u_m \le \left(p+q^m(1-p)\right)u_0, \]
where $p+q^m(1-p) = \frac{3\zeta\eta L^2}{\mu - 2\zeta\eta L^2} + \frac{(1+4\zeta\eta^2 L^2-2\eta\mu)^m(\mu - 5\zeta\eta L^2)}{\mu - 2\zeta\eta L^2} = \alpha$. It follows directly from the algorithm that after $S$ outer loops, $\mathbb{E}d^2(x_a,x^*) = \mathbb{E}d^2(\tilde{x}^S,x^*) \le \alpha^S d^2(x^0,x^*)$. 
\end{proof}

\begin{convex-corollary}
	With assumptions as in Theorem \ref{th:theorem_stronglyconvex} and properly chosen parameters, after $O\left((n+\frac{\zeta L^2}{\mu^2})\log(\frac{1}{\epsilon})\right)$ IFO calls, the output $x_a$ satisfies
	\[ \mathbb{E}[f(x_a) - f(x^*)] \leq \epsilon. \]
\end{convex-corollary}

\begin{proof}
	Assume we choose $\eta = \mu / (17\zeta L^2)$ and $m \ge 10\zeta L^2/\mu^2$, it follows that $q = 1 - 30\mu^2/(289\zeta L^2) \le 1 - \mu^2/(10\zeta L^2), p = 1/5$ and therefore
	\[ u_m \le \left(\frac{1}{5} + \frac{4}{5}\left(1 - \mu^2/(10\zeta L^2)\right)^{10\zeta L^2/\mu^2}\right)u_0 \le \left(\frac{1}{5} + \frac{4}{5e}\right)u_0 \le \frac{u_0}{2}, \]
	where the second inequality is due to $(1-x)^{1/x}\le 1/e$ for $x\in(0,1)$.
	Applying Theorem \ref{th:theorem_stronglyconvex} with $\alpha=1/2$, we have $\mathbb{E}d^2(x_a,x^*) \le 2^{-S} d^2(x^0,x^*)$. Note that by using the $L$-g-smooth assumption, we also get $\mathbb{E}[f(x_a) - f(x^*)] \le \mathbb{E}\left[\frac{1}{2}Ld^2(x_a,x^*)\right] \le 2^{-S-1} L d^2(x^0,x^*)$. It thus suffices to run $\log_2(Ld^2(x^0,x^*)/\epsilon)-1$ outer loops to guarantee $\mathbb{E}[f(x_a) - f(x^*)] \le \epsilon$.
	
	For the $s$-th outer loop, we need $n$ IFO calls to evaluate the full gradient at $\tilde{x}^s$, and $2m$ IFO calls when calculating each variance reduced gradient. Hence the total number of IFO calls to reach $\epsilon$ accuracy is $O\left((n+\frac{\zeta L^2}{\mu^2})\log(\frac{1}{\epsilon})\right)$.
\end{proof}

\section{Proofs for Section \ref{sec:main-nonconvex}}
\begin{theorem}
	Assuming the inverse exponential map is well-defined on $\mathcal{X}$, $f: \mathcal{X}\to\mathbb{R}$ is a geodesically $L$-smooth function, stochastic first-order oracle $\nabla \tilde{f}(x)$ satisfies $\mathbb{E}[\nabla \tilde{f}(x^t)] = \nabla f(x^t), \|\nabla \tilde{f}(x^t)\|^2 \le \sigma^2$, then the SGD algorithm $x^{t+1} = \Exp_{x^t}(-\eta\nabla \tilde{f}(x^t))$ with $\eta = c/\sqrt{T}, c = \sqrt{\frac{2(f(x^0) - f(x^*))}{L\sigma^2}}$ satisfies
	$$\min_{0\le t \le T-1} \mathbb{E}[\|\nabla f(x^t)\|^2] \le \sqrt{\frac{2(f(x^0) - f(x^*))L}{T}} \sigma.$$
\end{theorem}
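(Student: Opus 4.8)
The plan is to mimic the classical nonconvex SGD analysis of Ghadimi and Lan, with the key observation that the only manifold-specific ingredient needed is the geodesic smoothness inequality stated in \S\ref{sec:functions}, which --- crucially --- carries \emph{no} curvature constant (unlike the distance recursion used elsewhere in the paper, where $\zeta$ appears). First I would apply $L$-g-smoothness at $x=x^t$ and $y=x^{t+1}=\Exp_{x^t}(-\eta\nabla\tilde f(x^t))$. Since the exponential map is invertible on $\mathcal{X}$ by assumption, $\Exp_{x^t}^{-1}(x^{t+1})=-\eta\nabla\tilde f(x^t)$ exactly, so
\[ f(x^{t+1}) \le f(x^t) - \eta\langle\nabla f(x^t),\nabla\tilde f(x^t)\rangle + \tfrac{L\eta^2}{2}\|\nabla\tilde f(x^t)\|^2. \]

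Next I would take expectation conditioned on $x^t$. Unbiasedness $\mathbb{E}[\nabla\tilde f(x^t)\mid x^t]=\nabla f(x^t)$ turns the cross term into $-\eta\|\nabla f(x^t)\|^2$, and the bounded second-moment assumption bounds the last term by $\tfrac{L\eta^2\sigma^2}{2}$. Rearranging gives $\eta\|\nabla f(x^t)\|^2 \le f(x^t) - \mathbb{E}[f(x^{t+1})\mid x^t] + \tfrac{L\eta^2\sigma^2}{2}$. Then I would take full expectations, sum over $t=0,\dots,T-1$, telescope the $f$-values, and use $f(x^T)\ge f(x^*)$ to obtain
\[ \eta\sum_{t=0}^{T-1}\mathbb{E}[\|\nabla f(x^t)\|^2] \le f(x^0) - f(x^*) + \tfrac{TL\eta^2\sigma^2}{2}. \]

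Finally, dividing by $T\eta$ and bounding the minimum by the average yields $\min_{0\le t\le T-1}\mathbb{E}[\|\nabla f(x^t)\|^2] \le \tfrac{f(x^0)-f(x^*)}{T\eta} + \tfrac{L\eta\sigma^2}{2}$; substituting $\eta=c/\sqrt T$ and the stated $c$ --- which is exactly the minimizer over $c>0$ of $\tfrac{f(x^0)-f(x^*)}{c} + \tfrac{Lc\sigma^2}{2}$ --- gives the claimed $O(\sigma\sqrt{L(f(x^0)-f(x^*))/T})$ bound after a one-line simplification.

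I do not expect a serious obstacle: the argument is essentially routine once one notices that the descent lemma is curvature-free, which is the whole reason the bound looks identical to the Euclidean case. The only points requiring care are (i) ensuring the iterates remain in $\mathcal{X}$ so that both g-smoothness and invertibility of $\Exp$ apply along the whole trajectory (implicit in the hypotheses), and (ii) being careful with the conditioning so that telescoping along a random trajectory is legitimate --- handled by taking conditional expectations at each step and then invoking the tower property before summing.
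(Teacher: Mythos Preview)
Your proposal is correct and mirrors the paper's own proof essentially line by line: apply $L$-g-smoothness at $(x^t,x^{t+1})$, take (conditional) expectations to turn the cross term into $-\eta\|\nabla f(x^t)\|^2$ and bound the quadratic term by $\tfrac{L\eta^2}{2}\sigma^2$, telescope, divide by $T\eta$, and plug in the stated $\eta$. The only differences are cosmetic---you are slightly more explicit about conditioning and about $c$ being the optimizer, while the paper writes expectations from the outset.
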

\begin{proof}
	\begin{align*}
	\mathbb{E}[f(x^{t+1})] \le & \; \mathbb{E}[f(x^t) + \langle \nabla f(x^t), \Exp_{x^t}^{-1}(x^{t+1}) \rangle + \frac{L}{2}\|\Exp_{x^t}^{-1}(x^{t+1})\|^2] \\
	\le & \; \mathbb{E}[f(x^t)] - \eta \mathbb{E}[\|\nabla f(x^t)\|^2] + \frac{L\eta^2}{2} \mathbb{E}[\|\nabla \tilde{f}(x^t)\|^2] \\
	\le  & \; \mathbb{E}[f(x^t)] - \eta \mathbb{E}[\|\nabla f(x^t)\|^2] + \frac{L\eta^2}{2} \sigma^2
	\end{align*}
	After rearrangement, we obtain
	$$\mathbb{E}[\|\nabla f(x^t)\|^2] \le \frac{1}{\eta} \mathbb{E}[f(x^t) - f(x^{t+1})] + \frac{L\eta}{2} \sigma^2$$
	Summing up the above equation from $t=0$ to $T-1$ and using $\eta = c/\sqrt{T}$ where
	$$c = \sqrt{\frac{2(f(x^0) - f(x^*))}{L\sigma^2}}$$
	we obtain
	\begin{align*}
	\min_t \mathbb{E}[\|\nabla f(x^t)\|^2] \le \; \frac{1}{T} \sum_{t=0}^{T-1} \mathbb{E}[\|f(x^t)\|^2] \le & \; \frac{1}{T\eta} \mathbb{E}[f(x^0) - f(x^T)] + \frac{L\eta}{2} \sigma^2 \\
	\le & \; \frac{1}{T\eta} (f(x^0) - f(x^*)) + \frac{L\eta}{2} \sigma^2 \\
	\le & \; \sqrt{\frac{2(f(x^0) - f(x^*))L}{T}} \sigma
	\end{align*}
\end{proof}

\begin{lemma} \label{th:lyapunov_lemma}
	Assume in (\ref{eq:finite-sum}) each $f_i$ is $L$-g-smooth, the sectional curvature in $\mathcal{X}$ is lower bounded by $\kappa_{\min}$, and we run Algorithm \ref{alg:rsvrg} with Option II.
	For $c_t, c_{t+1}, \beta , \eta > 0$, suppose we have
	\[ c_t = c_{t+1}\left(1 + \beta  \eta + 2\zeta L^2\eta^2\right) + L^3\eta^2 \]
	and
	\[ \delta(t) = \eta - \frac{c_{t+1}\eta}{\beta } - L\eta^2 - 2c_{t+1}\zeta \eta^2 > 0, \]
	then the iterate $x_t^{s+1}$ satisfies the bound:
	\[ \mathbb{E}\left[\|\nabla f(x_t^{s+1})\|^2\right] \le \frac{R_t^{s+1} - R_{t+1}^{s+1}}{\delta_t} \]
	where $R_t^{s+1} := \mathbb{E}[f(x_t^{s+1}) + c_t \|\Exp_{\tilde{x}^s}(x_t^{s+1})\|^2]$ for $0\le s \le S-1$.
\end{lemma}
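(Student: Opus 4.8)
The plan is to transplant the Euclidean SVRG-for-nonconvex argument of \cite{reddi2016stochastic} to the manifold, replacing the parallelogram/law-of-cosines identities by the trigonometric comparison inequality of Lemma~\ref{th:distance_bound}. Fix an epoch $s$ and an inner iteration $t$, abbreviate $d_t := d(x_t^{s+1},\tilde{x}^s) = \|\Exp_{\tilde{x}^s}^{-1}(x_t^{s+1})\|$ (the quantity in the definition of $R_t^{s+1}$), and take all expectations conditional on $x_t^{s+1}$ with respect to the random index $i_t$; the stated bound then follows by the tower property since each $R_t^{s+1}$ is a full expectation. I would split the estimate into a descent bound for $f$ and a growth bound for the distance to $\tilde x^s$, and then combine them with weight $c_{t+1}$.

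First I would bound the second moment of the update direction. Since $\mathbb{E}_{i_t}[v_t^{s+1}]=\nabla f(x_t^{s+1})$ and $v_t^{s+1}-\nabla f(x_t^{s+1})$ is the centered version of $\nabla f_{i_t}(x_t^{s+1}) - \Gamma_{\tilde{x}^s}^{x_t^{s+1}}\nabla f_{i_t}(\tilde{x}^s)$, combining $\mathrm{Var}(\cdot)\le\mathbb{E}\|\cdot\|^2$ with $L$-g-Lipschitzness of each $\nabla f_i$ (which gives $\|\nabla f_{i_t}(x_t^{s+1}) - \Gamma_{\tilde{x}^s}^{x_t^{s+1}}\nabla f_{i_t}(\tilde{x}^s)\|\le L d_t$) yields $\mathbb{E}\|v_t^{s+1}\|^2 \le 2\|\nabla f(x_t^{s+1})\|^2 + 2L^2 d_t^2$. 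Then, applying $L$-g-smoothness along the geodesic from $x_t^{s+1}$ to $x_{t+1}^{s+1}=\Exp_{x_t^{s+1}}(-\eta v_t^{s+1})$ gives $f(x_{t+1}^{s+1}) \le f(x_t^{s+1}) - \eta\langle\nabla f(x_t^{s+1}),v_t^{s+1}\rangle + \tfrac{L\eta^2}{2}\|v_t^{s+1}\|^2$; taking $\mathbb{E}_{i_t}$ and inserting the second-moment bound produces $\mathbb{E}[f(x_{t+1}^{s+1})] \le f(x_t^{s+1}) - (\eta - L\eta^2)\|\nabla f(x_t^{s+1})\|^2 + L^3\eta^2 d_t^2$.

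The crux is controlling $\mathbb{E}\,d^2(x_{t+1}^{s+1},\tilde{x}^s)$. I would look at the geodesic triangle with vertices $x_t^{s+1}$, $x_{t+1}^{s+1}$, $\tilde{x}^s$: the two sides meeting at $x_t^{s+1}$ have lengths $b=\eta\|v_t^{s+1}\|$ (along $\Exp_{x_t^{s+1}}^{-1}(x_{t+1}^{s+1})=-\eta v_t^{s+1}$) and $c=d_t$, with included angle $A$. Since $x\mapsto x/\tanh x$ is increasing and $c=d_t\le D$, Lemma~\ref{th:distance_bound} yields $d^2(x_{t+1}^{s+1},\tilde{x}^s) \le \zeta\eta^2\|v_t^{s+1}\|^2 + d_t^2 + 2\eta\langle v_t^{s+1},\Exp_{x_t^{s+1}}^{-1}(\tilde{x}^s)\rangle$, using the identification $bc\cos A = -\langle \eta v_t^{s+1},\Exp_{x_t^{s+1}}^{-1}(\tilde{x}^s)\rangle$. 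Taking $\mathbb{E}_{i_t}$ replaces $v_t^{s+1}$ by $\nabla f(x_t^{s+1})$ in the inner product; bounding that term by Young's inequality ($\le \tfrac{1}{2\beta}\|\nabla f(x_t^{s+1})\|^2 + \tfrac{\beta}{2}d_t^2$) and using the second-moment bound for $\mathbb{E}\|v_t^{s+1}\|^2$ gives $\mathbb{E}\,d^2(x_{t+1}^{s+1},\tilde{x}^s) \le (2\zeta\eta^2 + \tfrac{\eta}{\beta})\|\nabla f(x_t^{s+1})\|^2 + (1+\beta\eta+2\zeta L^2\eta^2)d_t^2$.

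Finally I would add $c_{t+1}$ times the last bound to the descent bound and take full expectations: the coefficient of $\mathbb{E}\,d_t^2$ is $L^3\eta^2 + c_{t+1}(1+\beta\eta+2\zeta L^2\eta^2) = c_t$ by the hypothesized recursion, so those terms combine with $\mathbb{E}[f(x_t^{s+1})]$ into $R_t^{s+1}$; the coefficient of $\mathbb{E}\|\nabla f(x_t^{s+1})\|^2$ is exactly $-(\eta - L\eta^2 - c_{t+1}\eta/\beta - 2c_{t+1}\zeta\eta^2) = -\delta(t)$. Hence $R_{t+1}^{s+1} \le R_t^{s+1} - \delta(t)\,\mathbb{E}\|\nabla f(x_t^{s+1})\|^2$, and dividing by $\delta(t)>0$ gives the claim. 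The only genuinely Riemannian step is the distance-growth bound, so I expect the main care to go into the triangle-comparison estimate — verifying the angle/inner-product identification and that $d_t\le D$ legitimately caps the curvature factor by $\zeta$ — while everything else is Euclidean-style bookkeeping of coefficients.
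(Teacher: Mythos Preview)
Your proposal is correct and follows essentially the same route as the paper's proof: the same $L$-g-smoothness descent estimate, the same application of Lemma~\ref{th:distance_bound} (with Young's inequality on the cross term) for the distance growth, and the same variance bound $\mathbb{E}\|v_t^{s+1}\|^2\le 2\|\nabla f(x_t^{s+1})\|^2+2L^2 d_t^2$, combined with weight $c_{t+1}$ so that the recursion for $c_t$ absorbs the $d_t^2$ terms. The only cosmetic difference is that the paper first assembles $R_{t+1}^{s+1}$ with $\mathbb{E}\|v_t^{s+1}\|^2$ left intact and substitutes the variance bound at the end, whereas you substitute it into the descent and distance estimates separately before combining; the resulting coefficients are identical.
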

\begin{proof}
	Since $f$ is $L$-smooth we have
	\begin{align} \label{eq:f_bound}
	\nonumber\mathbb{E}[f(x_{t+1}^{s+1})] \le & \; \mathbb{E}[f(x_{t}^{s+1}) + \langle \nabla f(x_{t}^{s+1}), \Exp_{x_{t}^{s+1}}^{-1}(x_{t+1}^{s+1}) \rangle + \frac{L}{2}\|\Exp_{x_{t}^{s+1}}^{-1}(x_{t+1}^{s+1})\|^2] \\
	\le & \; \mathbb{E}[f(x_{t}^{s+1}) - \eta \|\nabla f(x_{t}^{s+1})\|^2 + \frac{L\eta^2}{2} \|v_{t}^{s+1}\|^2]
	\end{align}
	Consider now the Lyapunov function
	$$R_t^{s+1} := \mathbb{E}[f(x_t^{s+1}) + c_t \|\Exp_{\tilde{x}^s}(x_t^{s+1})\|^2]$$
	For bounding it we will require the following:
	\begin{align} \label{eq:dist_bound}
	\nonumber\mathbb{E}[\|\Exp_{\tilde{x}^s}^{-1}(x_{t+1}^{s+1})\|^2] \le &\; \mathbb{E}[\|\Exp_{\tilde{x}^s}^{-1}(x_t^{s+1})\|^2 + \zeta  \|\Exp_{x_t^{s+1}}^{-1}(x_{t+1}^{s+1})\|^2 \\
	\nonumber&\quad - 2\langle\Exp_{x_t^{s+1}}^{-1}(x_{t+1}^{s+1}), \Exp_{x_t^{s+1}}^{-1}(\tilde{x}^s)\rangle] \\
	\nonumber= & \; \mathbb{E}[\|\Exp_{\tilde{x}^s}^{-1}(x_t^{s+1})\|^2 + \zeta \eta^2 \|v_t^{s+1}\|^2 \\
	\nonumber&\quad + 2\eta \langle\nabla f(x_t^{s+1}), \Exp_{x_t^{s+1}}^{-1}(\tilde{x}^s)\rangle] \\
	\nonumber\le & \; \mathbb{E}[\|\Exp_{\tilde{x}^s}^{-1}(x_t^{s+1})\|^2 + \zeta \eta^2 \|v_t^{s+1}\|^2] \\
	&\quad + 2\eta \mathbb{E}\left[\frac{1}{2\beta } \|\nabla f(x_t^{s+1})\|^2 + \frac{\beta }{2} \|\Exp_{\tilde{x}^s}^{-1}(x_t^{s+1})\|^2 \right]
	\end{align}
	where the first inequality is due to Lemma \ref{th:distance_bound}, the second due to $2\langle a, b\rangle \le \frac{1}{\beta}\|a\|^2 + \beta \|b\|^2$. Plugging Equation (\ref{eq:f_bound}) and Equation (\ref{eq:dist_bound}) into $R_{t+1}^{s+1}$, we obtain the following bound:
	\begin{align} \label{eq:lyapunov_bound_1}
	\nonumber R_{t+1}^{s+1} \le & \; \mathbb{E}[f(x_{t}^{s+1}) - \eta \|\nabla f(x_{t}^{s+1})\|^2 + \frac{L\eta^2}{2} \|v_{t}^{s+1}\|^2] \\
	\nonumber&\quad + c_{t+1} \mathbb{E}[\|\Exp_{\tilde{x}^s}^{-1}(x_t^{s+1})\|^2 + \zeta \eta^2 \|v_t^{s+1}\|^2] \\
	\nonumber&\quad + 2c_{t+1}\eta \mathbb{E}\left[\frac{1}{2\beta } \|\nabla f(x_t^{s+1})\|^2 + \frac{\beta }{2} \|\Exp_{\tilde{x}^s}^{-1}(x_t^{s+1})\|^2 \right] \\
	\nonumber = & \; \mathbb{E}\left[f(x_{t}^{s+1}) - \left(\eta - \frac{c_{t+1}\eta}{\beta }\right) \|\nabla f(x_{t}^{s+1})\|^2\right] \\
	\nonumber&\quad + \left(\frac{L\eta^2}{2} + c_{t+1}\zeta \eta^2\right) \mathbb{E}\left[\|v_{t}^{s+1}\|^2\right] \\
	&\quad + \left(c_{t+1} + c_{t+1}\eta\beta \right) \mathbb{E}\left[\|\Exp_{\tilde{x}^s}^{-1}(x_t^{s+1})\|^2 \right]
	\end{align}
	It remains to bound $\mathbb{E}\left[\|v_{t}^{s+1}\|^2\right]$. Denoting $\Delta_t^{s+1} = \nabla f_{i_t}(x_t^{s+1}) - \Gamma_{\tilde{x}^s}^{x_t^{s+1}} \nabla f_{i_t}(\tilde{x}^s)$, we have $\mathbb{E}[\Delta_t^{s+1}] = \nabla f(x_t^{s+1}) - \Gamma_{\tilde{x}^s}^{x_t^{s+1}} \nabla f(\tilde{x}^s)$, and thus
	\begin{align} \label{eq:v_bound}
	\nonumber\mathbb{E}\left[\|v_{t}^{s+1}\|^2\right] =&\; \mathbb{E}\left[\|\Delta_t^{s+1}+ \Gamma_{\tilde{x}^s}^{x_t^{s+1}} \nabla f(\tilde{x}^s)\|^2\right] \\
	\nonumber=&\; \mathbb{E}\left[\|\Delta_t^{s+1} - \mathbb{E}[\Delta_t^{s+1}] + \nabla f(x_t^{s+1}) \|^2\right] \\
	\nonumber\le &\; 2 \mathbb{E}[\|\Delta_t^{s+1} - \mathbb{E}[\Delta_t^{s+1}]\|^2] + 2\mathbb{E}[\|\nabla f(x_t^{s+1}) \|^2] \\
	\nonumber\le &\; 2 \mathbb{E}[\|\Delta_t^{s+1} \|^2] + 2\mathbb{E}[\|\nabla f(x_t^{s+1}) \|^2]  \\
	\le &\; 2L^2 \mathbb{E}[\|\Exp_{\tilde{x}^s}^{-1}(x_{t}^{s+1}) \|^2] + 2\mathbb{E}[\|\nabla f(x_t^{s+1}) \|^2]
	\end{align}
	where the first inequality is due to $\|a+b\|^2\le 2\|a\|^2 + 2\|b\|^2$, the second due to $\mathbb{E}\|\xi - \mathbb{E}\xi\|^2 = \mathbb{E}\|\xi\|^2 - \|\mathbb{E}\xi\|^2 \le \mathbb{E}\|\xi\|^2$ for any random vector $\xi$ in any tangent space, the third due to $L$-g-smooth assumption. Substituting Equation (\ref{eq:v_bound}) into Equation (\ref{eq:lyapunov_bound_1}) we get
	\begin{align}
	\nonumber R_{t+1}^{s+1} \le & \; \mathbb{E}\left[f(x_{t}^{s+1}) - \left(\eta - \frac{c_{t+1}\eta}{\beta } - L\eta^2 - 2c_{t+1}\zeta \eta^2\right) \|\nabla f(x_{t}^{s+1})\|^2\right] \\
	\nonumber &\quad + \left(c_{t+1}\left(1 + \beta  \eta + 2\zeta L^2\eta^2\right) + L^3\eta^2\right) \mathbb{E}\left[\|\Exp_{\tilde{x}^s}^{-1}(x_t^{s+1})\|^2 \right] \\
	= & \; R_t^{s+1} - \left(\eta - \frac{c_{t+1}\eta}{\beta } - L\eta^2 - 2c_{t+1}\zeta \eta^2\right) \mathbb{E}\left[\|\nabla f(x_{t}^{s+1})\|^2\right]
	\end{align}
	Rearranging terms completes the proof.
\end{proof}

\begin{theorem} \label{th:nonconvex-general}
	With assumptions as in Lemma \ref{th:lyapunov_lemma}, let $c_m = 0, \eta > 0, \beta > 0$, and $c_t = c_{t+1}\left(1 + \beta  \eta + 2\zeta L^2\eta^2\right) + L^3\eta^2$ such that $\delta(t) > 0$ for $0\le t\le m-1$. Define the quantity $\delta_n := \min_t \delta(t)$, and let $T = mS$. Then for the output $x_a$ from Option II we have
	\[ \mathbb{E}[\|\nabla f(x_a)\|^2] \le \frac{f(x^0) - f(x^*)}{T\delta_n} \]
\end{theorem}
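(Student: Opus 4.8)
The plan is to chain the per-step bound from Lemma \ref{th:lyapunov_lemma} by a double telescoping argument — first over the inner-loop index $t$, then over the outer-loop index $s$ — and finally convert the accumulated sum into a bound on the single uniformly-sampled output iterate.

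First I would invoke Lemma \ref{th:lyapunov_lemma}: under the stated recursion $c_t = c_{t+1}(1 + \beta\eta + 2\zeta L^2\eta^2) + L^3\eta^2$ and the positivity hypothesis $\delta(t) > 0$ for $0 \le t \le m-1$, for every $0 \le s \le S-1$ and every $0 \le t \le m-1$ we have $\mathbb{E}[\|\nabla f(x_t^{s+1})\|^2] \le (R_t^{s+1} - R_{t+1}^{s+1})/\delta(t) \le (R_t^{s+1} - R_{t+1}^{s+1})/\delta_n$, where the last step uses $\delta(t) \ge \delta_n > 0$ together with the fact that $R_t^{s+1} - R_{t+1}^{s+1} \ge 0$ (itself forced by the lemma's inequality, whose left side is nonnegative). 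Summing this over $t = 0,\dots,m-1$ telescopes the right-hand side to $(R_0^{s+1} - R_m^{s+1})/\delta_n$.

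Next I would evaluate the two boundary Lyapunov values. Since $x_0^{s+1} = \tilde{x}^s$ in Algorithm \ref{alg:rsvrg}, the squared-distance term in $R_0^{s+1}$ vanishes, giving $R_0^{s+1} = \mathbb{E}[f(\tilde{x}^s)]$; and since $c_m = 0$, the squared-distance term in $R_m^{s+1}$ also drops, so with $\tilde{x}^{s+1} = x_m^{s+1}$ we get $R_m^{s+1} = \mathbb{E}[f(\tilde{x}^{s+1})]$. Hence $\sum_{t=0}^{m-1}\mathbb{E}[\|\nabla f(x_t^{s+1})\|^2] \le (\mathbb{E}[f(\tilde{x}^s)] - \mathbb{E}[f(\tilde{x}^{s+1})])/\delta_n$. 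Summing over $s = 0,\dots,S-1$ telescopes again, leaving $(\mathbb{E}[f(\tilde{x}^0)] - \mathbb{E}[f(\tilde{x}^S)])/\delta_n$, and using $\tilde{x}^0 = x^0$ and $f(\tilde{x}^S) \ge f(x^*)$ this is at most $(f(x^0) - f(x^*))/\delta_n$.

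Finally, since Option II outputs $x_a$ drawn uniformly at random from the $T = mS$ iterates $\{\{x_t^{s+1}\}_{t=0}^{m-1}\}_{s=0}^{S-1}$, we have $\mathbb{E}[\|\nabla f(x_a)\|^2] = \frac{1}{T}\sum_{s=0}^{S-1}\sum_{t=0}^{m-1}\mathbb{E}[\|\nabla f(x_t^{s+1})\|^2] \le (f(x^0) - f(x^*))/(T\delta_n)$, as claimed. I do not expect a serious obstacle here: the whole argument is a double telescoping. The only points needing care are verifying that the boundary terms vanish (which rests on $x_0^{s+1} = \tilde{x}^s$ and $c_m = 0$) and that a consistent choice of $(c_t,\beta,\eta)$ with all $\delta(t) > 0$ actually exists — but the latter is precisely the standing hypothesis of the theorem, and is made concrete by the specific parameter settings used in Theorem \ref{th:main-nonconvex}.
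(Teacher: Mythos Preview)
Your proposal is correct and follows essentially the same double-telescoping argument as the paper: apply Lemma~\ref{th:lyapunov_lemma}, sum over $t$ using $c_m=0$ and $x_0^{s+1}=\tilde{x}^s$ to collapse the boundary terms, then sum over $s$ and bound $f(\tilde{x}^S)\ge f(x^*)$, finally invoking the uniform sampling of Option II. You are in fact slightly more careful than the paper in justifying the replacement of $\delta(t)$ by $\delta_n$ via $R_t^{s+1}-R_{t+1}^{s+1}\ge 0$.
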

\begin{proof}
	Using Lemma \ref{th:lyapunov_lemma} and telescoping the sum, we obtain
	\[ \sum_{t=0}^{m-1} \mathbb{E}[\|\nabla f(x_t^{s+1})\|^2] \le \frac{R_0^{s+1} - R_m^{s+1}}{\delta_n} \]
	Since $c_m = 0$ and $x_0^{s+1} = \tilde{x}^s$, we thus have
	\begin{equation}
	\sum_{t=0}^{m-1} \mathbb{E}[\|\nabla f(x_t^{s+1})\|^2] \le \frac{\mathbb{E}[f(\tilde{x}^s) - f(\tilde{x}^{s+1})]}{\delta_n},
	\end{equation}
	Now sum over all epochs to obtain
	\begin{equation} \label{eq:thm2}
	\frac{1}{T}\sum_{s=0}^{S-1}\sum_{t=0}^{m-1} \mathbb{E}[\|\nabla f(x_t^{s+1})\|^2] \le \frac{f(\tilde{x}^0) - f(x^*)}{T\delta_n}
	\end{equation}
	Note the definition of $x_a$ implies that the left hand side of (\ref{eq:thm2}) is exactly $\mathbb{E}[\|\nabla f(x_a)\|^2]$.
\end{proof}

\begin{nonconvex-main}
	Assume in (\ref{eq:finite-sum}) each $f_i$ is $L$-g-smooth, the sectional curvature in $\mathcal{X}$ is lower bounded by $\kappa_{\min}$, and we run Algorithm \ref{alg:rsvrg} with Option II. Then there exist universal constants $\mu_0 \in (0,1),\nu > 0$ such that if we set $\eta=\mu_0 / (Ln^{\alpha_1}\zeta^{\alpha_2})$ ($0<\alpha_1\le 1$ and $0\le \alpha_2 \le 2$), $m=\lfloor n^{3\alpha_1/2} / (3\mu_0\zeta^{1-2\alpha_2})\rfloor$ and $T = mS$, we have 
	\[ \mathbb{E}[\|\nabla f(x_a)\|^2] \le \frac{Ln^{\alpha_1}\zeta^{\alpha_2}[f(x^0) - f(x^*)]}{T\nu}, \]
	where $x^*$ is an optimal solution to the problem in (1).
\end{nonconvex-main}
\begin{proof}
	Let $\beta = L\zeta^{1-\alpha_2}/n^{\alpha_1/2}$. From the recurrence relation $c_t = c_{t+1}\left(1 + \beta  \eta + 2\zeta L^2\eta^2\right) + L^3\eta^2$ and $c_m=0$ we have $$c_0 = \frac{\mu_0^2 L}{n^{2\alpha_1}\zeta^{2\alpha_2}}\frac{(1+\theta)^m-1}{\theta},$$ where
	$$\theta = \eta\beta + 2\zeta\eta^2 L^2 = \frac{\mu_0\zeta^{1-2\alpha_2}}{n^{3\alpha_1/2}} + \frac{2\mu_0^2\zeta^{1-2\alpha_2}}{n^{2\alpha_1}} \in \left(\frac{\mu_0\zeta^{1-2\alpha_2}}{n^{3\alpha_1/2}}, \frac{3\mu_0\zeta^{1-2\alpha_2}}{n^{3\alpha_1/2}}\right).$$
	Notice that $\theta < 1/m$ so that $(1+\theta)^m < e$. We can thus bound $c_0$ by 
	\[ c_0 \le \frac{\mu_0 L}{n^{\alpha_1/2}\zeta}(e-1) \]
	and in turn bound $\delta_n$ by
	\begin{align*}
	\delta_n &= \min_t \left( \eta - \frac{c_{t+1}\eta}{\beta} - \eta^2L - 2c_{t+1}\zeta\eta^2 \right) \\
	&\ge \left( \eta - \frac{c_{0}\eta}{\beta} - \eta^2L - 2c_{0}\zeta\eta^2 \right) \\
	&\ge \eta \left( 1 - \frac{\mu_0(e-1)}{\zeta^{2-\alpha_2}} - \frac{\mu_0}{n^{\alpha_1}\zeta^{\alpha_2}} - \frac{2\mu_0^2(e-1)}{n^{3\alpha_1/2}\zeta^{\alpha_2}} \right) \\
	&\ge \frac{\nu}{Ln^{\alpha_1}\zeta^{\alpha_2}}
	\end{align*}
	where the last inequality holds for small enough $\mu_0$, as $\zeta, n\ge 1$. For example, it holds for $\mu_0 = 1/10, \nu = 1/2$. Substituting the above bound in Theorem \ref{th:nonconvex-general} concludes the proof.
\end{proof}

\begin{nonconvex-complexity}
	With assumptions and parameters in Theorem \ref{th:main-nonconvex}, choosing $\alpha_1 = 2/3$, the IFO complexity for achieving an $\epsilon$-accurate solution is:
	\[ \text{IFO calls} = \left\{ \begin{array}{lr}
	O\left(n + (n^{2/3}\zeta^{1-\alpha_2}/\epsilon)\right), & \text{if } \alpha_2 \le 1/2, \\
	O\left(n\zeta^{2\alpha_2-1} + (n^{2/3}\zeta^{\alpha_2}/\epsilon)\right), & \text{if } \alpha_2 > 1/2.
	\end{array} \right. \]
\end{nonconvex-complexity}
\begin{proof}
	Note that to reach an $\epsilon$-accurate solution, $O(n^{\alpha_1}\zeta^{\alpha_2}/(m\epsilon)) = O(1+ n^{-1/3}\zeta^{1-\alpha_2}/\epsilon)$ epochs are required. On the other hand, one epoch takes $O\left(n(1+\zeta^{2\alpha_2-1})\right)$ IFO calls. Thus the total amount of IFO calls is $O\left(n(1+\zeta^{2\alpha_2-1})(1+ n^{-1/3}\zeta^{1-\alpha_2}/\epsilon)\right)$. Simplify to get the stated result.
\end{proof}

\begin{gradient-dominated-main}
	Suppose that in addition to the assumptions in Theorem \ref{th:main-nonconvex}, $f$ is $\tau$-gradient dominated. Then there exist universal constants $\mu_0 \in (0,1),\nu > 0$ such that if we run Algorithm \ref{alg:gd-rsvrg} with $\eta = \mu_0/(Ln^{2/3}\zeta^{1/2}), m = \lfloor n/(3\mu_0)\rfloor, S = \lceil (6+\frac{18\mu_0}{n-3})L\tau\zeta^{1/2}\mu_0/(\nu n^{1/3})\rceil$, we have
\begin{align*}
\mathbb{E}[\|\nabla f(x^K)\|^2] &\le 2^{-K}\|\nabla f(x^0)\|^2, \\ \mathbb{E}[f(x^K) - f(x^*)] &\le 2^{-K} [f(x^0) - f(x^*)].
\end{align*}
\end{gradient-dominated-main}
\begin{proof}
	Apply Theorem \ref{th:main-nonconvex}. Observe that for each run of Algorithm \ref{alg:rsvrg} with Option II we now have $T = mS \ge 2L\tau n^{2/3}\zeta^{1/2}/\nu$, which implies
	\begin{align*}
			\frac{1}{\tau} \mathbb{E}[f(x^{k+1}) - f(x^*)] \le \mathbb{E}[\|\nabla f(x^{k+1})\|^2] 
			\le \frac{1}{2\tau} \mathbb{E}[f(x^{k}) - f(x^*)]
			\le \frac{1}{2} \mathbb{E}[\|\nabla f(x^{k})\|^2]
	\end{align*}
	The theorem follows by recursive application of the above inequality.
\end{proof}

\begin{gradient-dominated-corollary}
	With Algorithm \ref{alg:gd-rsvrg} and the parameters in Theorem \ref{th:gradient-dominated}, the IFO complexity to compute an $\epsilon$-accurate solution for gradient dominated function $f$ is $O((n+L\tau\zeta^{1/2}n^{2/3})\log(1/\epsilon))$.
\end{gradient-dominated-corollary}
\begin{proof}
	We need $O((n+m)S) = O(n+L\tau\zeta^{1/2}n^{2/3})$ IFO calls in a run of Algorithm \ref{alg:rsvrg} to double the accuracy, thus in Algorithm \ref{alg:gd-rsvrg}, $K = O(\log(1/\epsilon))$ runs are needed to reach $\epsilon$-accuracy.
\end{proof}

\begin{gradient-dominated-strongly-convex}
	With Algorithm \ref{alg:gd-rsvrg} and the parameters in Theorem \ref{th:gradient-dominated}, the IFO complexity to compute an $\epsilon$-accurate solution for a $\mu$-strongly g-convex function $f$ is $O((n+\mu^{-1}L\zeta^{1/2}n^{2/3})\log(1/\epsilon))$.
\end{gradient-dominated-strongly-convex}
\begin{proof}
	Assume $x^*$ is the minimizer of $f$ and $f$ is $\mu$-strongly g-convex, then we have
	\begin{align*}
		f(x^*)  = &~ \min_y f(y) \\
			\ge &~ \min_y f(x) + \langle\nabla f(x), \Exp_x^{-1}(y)\rangle + \frac{\mu}{2}\|\Exp_x^{-1}(y)\|^2 \\
			= &~ f(x) - \frac{1}{2\mu}\|\nabla f(x)\|^2 + \min_y \frac{1}{2\mu}\|\nabla f(x) + \mu\Exp_x^{-1}(y)\|^2 \\
			\ge &~ f(x) - \frac{1}{2\mu}\|\nabla f(x)\|^2
	\end{align*}
	where we get the first inequality by strong g-convexity, the second equality by completing the squares, and the second inequality by choosing $y = \Exp_x\left(-\frac{1}{\mu}\nabla f(x)\right)$.
	Thus $f(x)$ is $(1/(2\mu))$-gradient dominated, and choosing $\tau = 1/(2\mu)$ in Corollary \ref{th:gradient-dominated-corollary} concludes the proof.
\end{proof}

\section{Proof for Section \ref{sec:experiments-eigenvector}}
\begin{pca-gd}
	Suppose $A$ has eigenvalues $\lambda_1 > \lambda_2 \ge \dots \ge \lambda_d$ and $\delta = \lambda_1 - \lambda_2$. With probability $1-p$, the random initialization $x^0$ falls in a Riemannian ball of a global optimum of the objective function, within which the objective function is $O(\tfrac{d}{p^2\delta})$-gradient dominated. 
\end{pca-gd}
\begin{proof}
	We write $x$ in the basis of $A$'s eigenvectors $\{v_i\}_{i=1}^d$ with corresponding eigenvalues $\lambda_1 > \lambda_2 \ge \dots \ge \lambda_d$, i.e.
	$ x = \sum_{i=1}^d \alpha_i v_i $.
	Thus $Ax = \sum_{i=1}^d \alpha_i \lambda_i v_i$ and $f(x) = - \sum_{i=1}^d \alpha_i^2 \lambda_i$. The Riemannian gradient of $f(x)$ is $P_x\nabla f(x) = -2(I-xx^{\top})Ax = -2(Ax + f(x)x) = -2\sum_{i=1}^d \alpha_i (\lambda_i - \sum_{j=1}^d \alpha_j^2 \lambda_j) v_i$. Now consider a Riemannian ball on the hypersphere defined by $\mathcal{B}_{\epsilon} \triangleq \{x: x\in\mathbb{S}^{d-1}, \alpha_1 \ge \epsilon\}$, note that the center of $\mathcal{B}_{\epsilon}$ is the first eigenvector. We apply a case by case argument with respect to $f(x) - f(x^*)$. If $f(x) - f(x^*) \ge \frac{\delta}{2}$, we can lower bound the gradient by
	\begin{align*}
	\tfrac14\|P_x\nabla f(x)\|^2 = &\; \nlsum_{i=1}^d \alpha_i^2 \Bigl(\lambda_i - \nlsum_{j=1}^d \alpha_j^2 \lambda_j\Bigr)^2 \ge \alpha_1^2\Bigl(\lambda_1 - \nlsum_{j=1}^d \alpha_j^2 \lambda_j\Bigr)^2 =  \alpha_1^2\left(f(x) - f(x^*)\right)^2 \\
	\ge & \; \tfrac12\alpha_1^2 \delta (f(x) - f(x^*)) \ge \tfrac12\epsilon^2 \delta (f(x) - f(x^*))
	\end{align*}
	The last equality follows from the fact that $f(x^*) = -\lambda_1$ and $f(x) = - \sum_{i=1}^d \alpha_i^2 \lambda_i$. On the other hand, if $f(x) - f(x^*) < \frac{\delta}{2}$, for $i=2,\dots,d$, since $-\lambda_i - f(x^*) \ge \delta$, we have $-\lambda_i - f(x) > \frac{1}{2} (-\lambda_i - f(x^*)) \ge \delta / 2$. We can, again, lower bound the gradient by
	\begin{align*}
	\|P_x\nabla f(x)\|^2 = &\; 4 \nlsum_{i=1}^d \alpha_i^2 \Bigl(\lambda_i - \nlsum_{j=1}^d \alpha_j^2 \lambda_j\Bigr)^2 \ge 4 \nlsum_{i=2}^d \alpha_i^2 \Bigl(\lambda_i - \nlsum_{j=1}^d \alpha_j^2 \lambda_j\Bigr)^2 \\
	\ge & \;  \nlsum_{i=2}^d \alpha_i^2 \left(\lambda_1 - \lambda_i\right)^2 \ge \delta \nlsum_{i=2}^d \alpha_i^2 \left(\lambda_1 - \lambda_i\right) = \delta(f(x) - f(x^*))
	\end{align*}
	Combining the two cases, we have that within $\mathcal{B}_{\epsilon}$ the objective function (\ref{eq:first-eigenvector}) is $\min\{\frac{1}{2\epsilon^2\delta}, \frac{1}{\delta}\}$-gradient dominated.
	Finally, observe that if $x^0$ is chosen uniformly at random on $\mathbb{S}^{d-1}$, then with probability at least $1-p$, $\alpha_1^2 = \Omega(\frac{p^2}{d})$, i.e. there exists some constant $c>0$ such that $\frac{1}{\epsilon^2} \le \frac{cd}{p^2}$.
\end{proof}

\end{document}